\let\OLDthebibliography\thebibliography
\renewcommand\thebibliography[1]{
	\OLDthebibliography{#1}
	\setlength{\itemsep}{0pt}
}
\newcommand{\C}{\mathbb C}
\newcommand{\Q}{\mathbb Q}
\newcommand{\R}{\mathbb R}
\newcommand{\Z}{\mathbb Z}
\newcommand{\OG}{\operatorname{O}}
\newcommand{\PS}{\mathcal P}
\newcommand{\HS}{\mathcal H}
\newcommand{\KS}{\mathcal K}
\newcommand{\TS}{\mathcal T}
\newcommand{\SSp}{\mathcal S}
\newcommand{\GS}{\mathcal G}
\newcommand{\g}{\mathfrak g}
\newcommand{\h}{\mathfrak h}
\newcommand{\Aut}{\operatorname{Aut}}
\newcommand{\rk}{\operatorname{rk}}
\newcommand{\hyp}{\operatorname{I\!I_{1,1}}}
\newcommand{\SL}{\operatorname{SL}}
\newcommand{\vac}{\vert 0 \rangle}
\newcommand{\und}{\underline}
\newtheoremstyle{thmbrk}{3pt}{3pt}{\itshape}{}{\bfseries}{}{\newline}{}
\theoremstyle{thmbrk}
\newtheorem{theorem}{Theorem}[section]
\newtheorem{prop}[theorem]{Proposition}
\newtheorem{lemma}[theorem]{Lemma}
\titleformat*{\section}{\bfseries\large}
\titleformat*{\subsection}{\large}
\numberwithin{equation}{section}
\begin{document}
\title{\textbf{Vertex operator expressions for Lie algebras of physical states}}
\author{Thomas Driscoll-Spittler\\
Fachbereich Mathematik,
Technische Universit\"{a}t Darmstadt\\
Darmstadt, Deutschland}
\date{}
\maketitle

\begin{abstract}
\noindent
We study the Lie algebra of physical states associated with certain vertex
operator algebras of central charge $24$. By applying the no-ghost theorem from
string theory we express the corresponding Lie brackets in terms of vertex algebra
operations. In the special case of the Moonshine module this result answers a question
of Borcherds, posed in his paper on the Monstrous moonshine conjecture.
\end{abstract}

\section{Introduction}
The Monstrous moonshine conjecture of Conway and Norton asked for a graded module
of the Monster group $\mathbb{M}$ such that its graded traces are Hauptmoduln for genus-$0$
subgroups of $\SL_2(\R)$. An $\mathbb{M}$-module that has this property for many
conjugacy classes of the Monster group was constructed by Frenkel, Lepowsky and Meurman
in \cite{FLM}. This is the Moonshine module $V^\natural$. In \cite{Bo92} Borcherds 
showed that the graded traces of all elements in the Monster group on this module
are Hauptmoduln. In his proof he made use of the vertex algebra structure of $V^\natural$,
or more precisely, the associated Lie algebra of physical states, the
Monster Lie algebra $\mathfrak{m}$. 
  
More generally, to any vertex operator algebra $V$ of central charge $24$ of CFT-type
with a non-degenerate, invariant bilinear form we can associate a Lie algebra of physical
states $\g(V)$. This Lie algebra can be constructed by a certain quantisation procedure
and carries an action of $\Aut(V)$ as well as a natural $\hyp$-grading
\begin{equation*}
  \g(V)=\bigoplus_{\alpha\in\hyp} \g(V)_\alpha.
\end{equation*}
Here  $\hyp$ denotes the unique even, unimodular lattice of signature $(1,1)$.
The no-ghost theorem states:
  
\emph{For every $0\ne \alpha\in\hyp$ there exists a linear isomorphism
$\eta_\alpha:V_{1-\alpha^2/2}\to\g(V)_{\alpha}$ which preserves the action of $\Aut(V)$.}
  
This version of the no-ghost theorem is due to Borcherds. It was used to determine the
full structure of $\mathfrak{m}$ as a generalised Kac-Moody algebra. A homological
version of the denominator identity of the Monster Lie algebra then implies the complete
replicability of the graded trace functions of the Moonshine module. This proves the
Monstrous moonshine conjecture.
  
Aside from Monstrous moonshine and its generalisations, Lie algebras of physical
states have been applied to relate vertex operator algebras to automorphic forms
and to prove structure results for the corresponding vertex operator algebras.
See \cite{Bo90}, \cite{HS03}, \cite{HS14}, \cite{CKS}, \cite{M21}, \cite{DSSW} and
\cite{SWW} as well as \cite{Mi23} respectively.
  
The no-ghost isomorphisms $\eta_\alpha$ give bilinear maps
\begin{align*}
  \{\cdot,\cdot\}_{\alpha,\beta}: V_{1-\alpha^2/2}\times V_{1-\beta^2/2}&\to 
  V_{1-(\alpha+\beta)^2/2}, \\
  (v,w)&\mapsto \pm\eta_{\alpha+\beta}^{-1}
  ([\eta_\alpha(v),\eta_\beta(w)]),
\end{align*}
where the sign $\pm$ is given by a suitable $2$-cocycle associated with the lattice
$\hyp$. In \cite{Bo92}, Section 15, Borcherds asked (Question $4$) for an explicit
description of those maps in terms of vertex algebra operations of $V$, at least in
the special case of the Moonshine module.
  
In this paper we provide an answer to this question. Our main theorem
(Theorem \ref{Blofeld}) states:
  
\emph{Let $V$ be a vertex operator algebra of central charge $24$ of CFT-type with a
non-degenerate, invariant bilinear form. Choose a primitive isotropic element
$f\in\hyp$ and assume that $\alpha,\beta,\alpha+\beta\notin f^\perp$. Then for
$v\in V_{1-\alpha^2/2}$ and $w\in V_{1-\beta^2/2}$ we have
\begin{align*}
  \{v,w\}_{\alpha,\beta}=
  \sum_{n_1,n_2=0}^\infty\sum_{k=0}^\infty p^{\alpha,\alpha+\beta}_{n_1+n_2+k-(\alpha,\beta)}
  (\imath_{n_1}(v)_k\jmath_{n_2}(w)),
\end{align*} 
with explicitly given operators $p^{\alpha,\alpha+\beta}_{h}$, $\imath_{n}$ and
$\jmath_{m}$ for $h\in\Z$ and $n,m\in\Z_{\geq 0}$.}
  
The proof of this result relies on a description of the no-ghost isomorphisms
in terms of an operator $E$, which was introduced by physicists and first employed
in mathematics by  I. Frenkel.
    
This paper is organised as follows: We recall some well-known facts about lattice
vertex algebras and sketch the covariant quantisation. Then we review the no-ghost
theorem and Borcherds' proof. This exposition is influenced by work of I. Frenkel.
Finally we consider the Lie algebra of physical states of suitable vertex operator
algebras and express the corresponding Lie brackets in terms of vertex algebra operations.
  
\subsection*{Acknowledgements}
This paper is based on results obtained in the author's PhD thesis supervised by
Nils R. Scheithauer. The author wants to thank
Sven M\"oller, Manuel M\"uller, Nils R. Scheithauer and Janik Wilhelm for valuable
comments and fruitful discussions. Furthermore the author acknowledges support by the
LOEWE research unit \emph{Uniformized Structures in Arithmetic and Geometry} and by
the DFG through the CRC \emph{Geometry and Arithmetic of Uniformized Structures},
project number 444845124.

\section{Vertex algebras and the covariant quantisation}
We briefly sketch some basic properties of vertex operator algebras and discuss
lattice vertex algebras. Based on this we introduce the covariant quantisation and
use it to construct Lie algebras of physical states. See \cite{Bo86}, \cite{KAC},
\cite{FBZ}, \cite{FHL}, \cite{Bo92} and \cite{Ju95}. 

A real \emph{vertex algebra} is a real vector space $V$ with a non-zero \emph{vacuum}
$\vac\in V$, a linear \emph{translation operator} $T:V\to V$ and a \emph{vertex operator},
which is a linear map
\begin{equation*}
  Y(\cdot,z):V \to \text{End}(V)[[z^{\pm 1}]],\,
  v\mapsto Y(v,z)=\sum_{n\in\mathbb{Z}}v_nz^{-n-1}.
\end{equation*}
For $v,w\in V$ the vertex operator satisfies
\begin{equation*}
  Y(v,z)w=\sum_{n\in\mathbb{Z}}v_nwz^{-n-1}\in V((z)),
\end{equation*}
where $V((z))$ is the space of formal Laurent series with values in $V$. In addition we
have the \emph{translation axiom}, the \emph{vacuum axiom} and the \emph{locality axiom}.
The latter was originally expressed by Borcherds as a version of the \emph{Borcherds identity}
which states that for $v,w\in V$ and $m,l,k\in \mathbb{Z}$ we have
\begin{align*}
  \sum_{n\geq 0}\binom{m}{n} &(v_{n+l}w)_{m+k-n}\\
  &=\sum_{n\geq 0}\binom{l}{n}((-1)^nv_{m+l-n}w_{k+n}-(-1)^{n+l}w_{k+l-n}v_{m+n}).
\end{align*}
A \emph{conformal vector} is a vector $\omega\in V$ such that its modes
\begin{equation*}
  Y(\omega,z)=\sum_{n\in\mathbb{Z}}L_nz^{-n-2}
\end{equation*}
generate a representation of the Virasoro algebra. More precisely for some $c\in\mathbb{R}$
the modes $L_n$ satisfy
\begin{equation}
  [L_n,L_m]=(n-m)L_{n+m}+\frac{n^3-n}{12}\delta_{n+m,0}c.
\end{equation}
The number $c$ is called the \emph{central charge} of $\omega$. A \emph{conformal
vertex algebra} of \emph{central charge}  $c$ is a vertex algebra $V$ which has a
conformal vector $\omega\in V$ of central charge $c$ such that $L_{-1}=T$ and $L_0$
acts semi-simply with integral eigenvalues on $V$. Hence $V$ is $\Z$-graded with
respect to  $L_0$ and we denote the corresponding eigenspaces by $V_n$ for $n\in\Z$.
A \emph{vertex operator algebra} is a conformal vertex algebra such that the weight
spaces $V_n$ are finite-dimensional and $V_n=0$ for sufficiently small $n$.
Furthermore a vertex operator algebra $V$ is said to be of \emph{CFT-type} if the
$V_n$ vanish for $n<0$ and $V_0=\R\vac$.

A bilinear form $(\cdot,\cdot)$ on a conformal vertex algebra is called \emph{invariant}
if for all $u,v,w\in V$ we have
\begin{equation}
  (Y(u,z)v,w)=(v,Y(e^{zL_1}(-z^{-2})^{L_0}u,z^{-1})w).
\end{equation}
Invariant bilinear forms on vertex operator algebras are symmetric and the space
of such forms is isomorphic to the dual space of $V_0/L_1 V_1$.  Hence a vertex
operator algebra $V$ of CFT-type has a unique invariant bilinear form such that
$(\vac,\vac)=1$. See \cite{FHL} and \cite{Li94}. We denote the orthogonal group with
respect to such a bilinear form by $\OG(V)$ and introduce the group
\begin{equation}\label{AstonMartin}
  G_V= \{g\in \OG(V): L_ng=gL_n\text{ for }n\in\Z\}.
\end{equation}
This is the group of all orthogonal maps on $V$ which commute with the Virasoro
operators. Clearly we have $\Aut(V)\subset G_V$.

We  review real conformal lattice vertex algebras associated with even, non-degenerate
lattices. Let $L$ be an even, non-degenerate lattice. We denote the corresponding real
conformal vertex algebra of central charge $l=\rk(L)$ by $V_L$. Consider the real
commutative Lie algebra $\h=L\otimes\R$ and its \emph{affinisation}
\begin{equation*}
 \hat\h = \h\otimes \R[t,t^{-1}]+ \R K,
\end{equation*}
with a central element $K$. For $h\in \h$ we set $h(n)=h\otimes t^n$, with $n\in\Z$.
We have the relation $[h(n),h'(m)] = n\delta_{n+m,0}(h,h')K$ for all $h,h'\in\h$ and
$n,m\in\Z$. We set $\hat{\h}_-=\h\otimes t^{-1}\R[t^{-1}]$.
The space $S(\hat{\h}_-)$ carries an action of $\hat\h$ called the
\emph{Fock representation}. For a choice of $2$-cocycle $\epsilon:L\times L \to \{\pm 1\}$
of $L$ with
\begin{equation}
  \epsilon(\alpha,\beta)\epsilon(\beta,\alpha)=(-1)^{(\alpha,\beta)}
\end{equation}
we can introduce the real twisted group ring $\R_\epsilon[L]$ of $L$ with
$e^\alpha e^\beta=\epsilon(\alpha,\beta)e^{\alpha+\beta}$ for $\alpha,\beta\in L$.
The Lie algebra $\hat\h$ acts on this space by $Ke^\alpha=0$ and
$h(n)e^\alpha=\delta_{n,0}(\alpha,h)e^\alpha$ for all $\alpha\in L$, $h\in\h$ and
$n\in \Z$. Then the underlying vector space of $V_L$ is given by
$S(\hat{\h}_-)\otimes \R_\epsilon[L]$ with its induced $\hat\h$-action. We denote
the operators $1\otimes e^\alpha$ by $e^\alpha$ and using that $K$ acts by $1$ on
$S(\hat{\h}_-)$, we obtain the relations 
\begin{align}
  [h(n),h'(m)]&=n\delta_{n+m,0}(h,h')\text{ and}\\
  [h(n),e^\alpha]&=\delta_{n,0}(\alpha,h)e^\alpha.
\end{align}
For $h\in\h$ we introduce the field
\begin{equation}
  h(z)=\sum_{n\in \Z}h(n)z^{-n-1}
\end{equation}
and for $\alpha\in L$ we define
\begin{equation}\label{Octopussy}
  \Gamma_\alpha(z)=e^{\alpha}z^{\alpha_0}\exp\left(-\sum_{n<0}\alpha(n)
  \frac{z^{-n}}{n}\right)\exp\left(-\sum_{n>0}\alpha(n)\frac{z^{-n}}{n}\right).
\end{equation}
By applying formal derivatives $\partial_z$ and normal ordering $:\cdot:$ we obtain
a vertex operator $Y$ on $V_L$ which satisfies
\begin{equation}
  Y(h(-1)\otimes e^0,z)=h(z)\quad \text{and}\quad
  Y(1\otimes e^\alpha,z)=\Gamma_\alpha(z)
\end{equation}
for all $h\in\h$ and $\alpha\in L$. Together with the vacuum $\vac=1\otimes e^0$,
the vertex operator $Y$ induces a vertex algebra structure on $V_L$. Notice that
different choices of the $2$-cocycle $\epsilon$ yield isomorphic vertex algebras.
We therefore fix a cocycle for the rest of this work.
  
Given $0\ne \alpha\in L$ and $k\in\Z$, $k\geq 0$, we introduce the 
\emph{Schur polynomials} $S_k(\alpha)$ by
\begin{equation*}
  \exp\left(-\sum_{n<0}\alpha(n)\frac{z^{-n}}{n}\right)=
  \sum_{k=0}^\infty S_k(\alpha)z^k.
\end{equation*}
We easily derive that for $k\in\Z$, $k\geq 0$, we have
\begin{equation}\label{Q}
  kS_k(\alpha)=\sum_{m=1}^k \alpha(-m)S_{k-m}(\alpha).
\end{equation}
All modes are of the form $\alpha(-m)$ for $m>0$, therefore all their commutators
vanish. We obtain
\begin{align}
  S_0(\alpha)=1,\,
  S_1(\alpha)=\alpha(-1)\text{ and }
  2S_2(\alpha)=\alpha(-1)^2+\alpha(-2).
\end{align} 
Of course the expression for the $0$-th Schur polynomial is a direct consequence
of the definition above.
  
Fix a basis $h_i$, $1\leq i\leq l$, of $\h$ and denote its dual basis by $h^i$.
The vector 
\begin{equation}
  \omega = \frac{1}{2}\sum_{i=1}^{l}h_i(-1)h^i(-1)\vac,
\end{equation}
which is independent of the choice of the basis $h_i$, is a conformal vector of
$V_L$ and the corresponding Virasoro field
\begin{equation}
  Y(\omega,z)=\sum_{n\in\Z}L_n z^{-n-2}=\frac{1}{2}\sum_{i=1}^l:h_i(z)h^i(z):
\end{equation}
turns $V_L$ into a conformal vertex algebra of central charge $l=\rk(L)$. Notice
that $V_L$ is in general not a vertex operator algebra. Yet this is the case if
the lattice $L$ is positive-definite. See \cite{Bo86} and \cite{KAC}.

For $0\ne \alpha\in L$ and $k,n\in\Z_{\geq 0}$ we have
\begin{equation}\label{TheSpangledMob}
  [L_{-n},S_k(\alpha)]= \sum_{i=1}^{k}\alpha(-n-i)S_{k-i}(\alpha).
\end{equation}	
This generalisation of (\ref{Q}) can be proved by induction over $k$.

We focus on the conformal lattice vertex algebra $V_{\hyp}$, associated with the
unique even, unimodular lattice $\hyp$ of signature $(1,1)$. This vertex algebra
can be equipped with a non-degenerate, invariant bilinear form $(\cdot,\cdot)$, 
normalised by $(\vac,\vac)=-1$. Furthermore $V_{\hyp}$ carries a natural vertex
algebra involution $\theta$, defined by $\theta(e^\alpha)=(-1)^{\alpha^2/2}(e^{\alpha})^{-1}$
and $\theta(h(n))=-h(n)$, for all $\alpha\in L$, $h\in\h$ and $n\in\Z$. Here the
inverse $(e^{\alpha})^{-1}$ has to be taken in the twisted group ring, i.e.
$(e^{\alpha})^{-1}=\epsilon(\alpha,-\alpha)e^{-\alpha}$. This involution preserves
the bilinear form. See \cite{Bo86}, \cite{Bo92} and \cite{Ju95}.
  
Let $V$ be a real vertex operator algebra of central charge $24$ of CFT-type with
a non-degenerate, invariant bilinear form $(\cdot,\cdot)$, normalised such that
$(\vac,\vac)=1$. Then $V\otimes V_{\hyp}$ is a conformal vertex algebra of central
charge $26$.  We denote its Virasoro modes by
\begin{equation}
  L(n)=L_n\otimes 1+1\otimes L_n
\end{equation}
in order to avoid misunderstandings. The bilinear forms on $V$ and $V_{\hyp}$
naturally extend to $V\otimes V_{\hyp}$ by
$(\cdot,\cdot)=(\cdot,\cdot)_V\otimes(\cdot,\cdot)_{V_{\hyp}}$.
(We indicate the respective vertex algebra by a subscript.)
This bilinear form is again invariant and  called the \emph{invariant bilinear form}.
Additionally we extend the involution $\theta$ to this space by $1\otimes\theta$
and we have a natural isometric $G_V$-action by letting $g\in G_V$ act as
$g\otimes 1$. Notice that the induced $\hyp$-grading, with weight spaces
\begin{equation*}
  \HS(\alpha)=V\otimes V_{\hyp,\alpha}=V\otimes S(\hat{\h}_-)\otimes e^\alpha
\end{equation*}
for $\alpha\in \hyp$, is compatible with the $L(0)$-grading. We introduce the
\emph{contravariant bilinear form}
\begin{equation*}
  (\cdot,\cdot)_0=-(\cdot,\theta(\cdot)),
\end{equation*}
on $V\otimes V_{\hyp}$, which is a symmetric, non-degenerate bilinear form. The
adjoint of $L(n)$ with respect to $(\cdot,\cdot)_0$ is given by $L(-n)$ and we have
\begin{equation}
  \HS(\alpha)\perp \HS(\beta)
\end{equation}
for $\alpha,\beta\in \hyp$ unless $\alpha=\beta$ with respect to this bilinear form. 
  
The space of \emph{primary states} (also \emph{physical states}) is given by
\begin{equation*}
  \PS=\{v\in V\otimes V_{\hyp}:L(n)v=0\text{ for all }n>0\}.
\end{equation*}
This space is homogeneous for the $\Z$-grading induced by $L(0)$ as well as the natural
$\hyp$-grading. Denote the corresponding weight spaces by $\PS^n$ and $\PS(\alpha)$,
for $n\in\Z$ and $\alpha\in\hyp$. Since those gradings are compatible we set
$\PS^n(\alpha)=\PS^n\cap \PS(\alpha)$. Standard properties of the vertex algebra
$V\otimes V_{\hyp}$ imply that the space $\PS^1/L(-1)\PS^0$ is a Lie algebra with the
Lie bracket $\big[[x],[y]\big]=[x_0y]$ for $x,y\in \PS^1$. Since $L(-1)\PS^0$ is
contained in the kernel of the bilinear form on $\PS^1$, this Lie algebra has a
natural invariant bilinear form $(\cdot,\cdot)$. 

The \emph{Lie algebra of physical states} is given by 
\begin{equation}
  \g(V)= \big(\PS^1/L(-1)\PS^0\big)/\ker(\cdot,\cdot)
\end{equation}
and equipped with a Lie algebra involution $\theta$, a non-degenerate, invariant
bilinear form $(\cdot,\cdot)$, a contravariant bilinear form $(\cdot,\cdot)_0$ and
a $\hyp$-grading. The respective weight spaces  $\g(V)_\alpha$ are given by the
decomposition
\begin{equation*}
  \g(V)=\PS^1/\ker (\cdot,\cdot)_0 = \bigoplus_{\alpha\in\hyp}\PS^1(\alpha)/
  \left(\ker (\cdot,\cdot)_0\cap\PS^1(\alpha)\right).
\end{equation*}
Since the action of the group $G_V$ preserves all the structures above, it acts naturally
on the Lie algebra of physical states and its weight spaces. This construction for the Lie
algebra of physical states is called the \emph{covariant quantisation}. Compare with the
similar discussion in \cite{Bo92} or \cite{Ju95}, where all details can be found.
  
Notice that an isomorphic Lie algebra can be constructed using BRST-cohomology.
For example this approach was taken in \cite{CKS}, \cite{M21} or \cite{DSSW}. Finally
we remark that such Lie algebras often are \emph{generalised Kac-Moody algebras} in
the sense of \cite{Bo88}.  Yet we do not need this fact for the following discussion.

\section{The no-ghost theorem}
In this section we review the no-ghost theorem, which is based on the work of physicists
Goddard and Thorn. It was first applied in representation theory by I. Frenkel.
Later it famously appeared in Borcherds' proof of the Monstrous moonshine conjecture.
Our exposition is based on \cite{Bo92}, \cite{Ju95} and \cite{Fr85}.
    
Let $V$ be a vertex operator algebra of central charge $24$ of CFT-type with a
non-degenerate, invariant bilinear form $(\cdot,\cdot)$. For the even, unimodular
lattice $\hyp$ of signature $(1,1)$ we set $\mathfrak{h}=\hyp\otimes\mathbb{R}$ and
consider the associated conformal lattice vertex algebra $V_{\hyp}$. In addition
we keep the notations from the previous section. 
  
Take $0\ne\alpha\in\hyp$. Fix an isotropic element $w_\alpha\in\h$
with $(\alpha,w_\alpha)=1$. For each  $n\in\mathbb{Z}$ we introduce the operator
$K_\alpha(n)=1\otimes w_\alpha(n)$, acting on $V\otimes V_{\hyp}$, which satisfies
\begin{equation}\label{LeChiffre}
  [K_\alpha(n),K_\alpha(m)]=0\text{ and }
  [L(n),K_\alpha(m)]=-mK_\alpha(n+m)
\end{equation}
for $n,m\in\mathbb{Z}$.  With respect to the contravariant bilinear form $(\cdot,\cdot)_0$,
the adjoint operator of $K_\alpha(n)$ is given by $K_\alpha(-n)$. See \cite{Bo92} and
\cite{Ju95}. On $\HS(\alpha)$ we have $K_{\alpha}(0)=1$ such that the corresponding
generating series is 
\begin{equation*}
  K_\alpha(z)=\sum_{n\in\Z} K_{\alpha}(n)z^{-n}= 1+K_{\alpha,0}(z).
\end{equation*}
This series has a formal inverse on $\HS(\alpha)$, given by
\begin{equation*}
  K_\alpha(z)^{-1}=(1+K_{\alpha,0}(z))^{-1}=1-K_{\alpha,0}(z)+K_{\alpha,0}(z)^2
  -K_{\alpha,0}(z)^3\pm\cdots.
\end{equation*}
Following \cite{Fr85} we denote its modes on $\HS(\alpha)$ by
\begin{equation*}
  K_\alpha(z)^{-1}=\sum_{n\in\Z} D_\alpha(n)z^{-n}.
\end{equation*}
For $n,m\in\Z$, $m>0$, we define operators
\begin{equation*}
  D_\alpha(n,m)=\sum_{\substack{n_1+\cdots+n_m=n \\ n_1,\cdots,n_m\ne 0}}
  K_\alpha(n_1)\cdots K_\alpha(n_m),
\end{equation*}
acting on $\HS(\alpha)$. In addition set $D_\alpha(n,0)=\delta_{n,0}$ for $n\in\Z$.
These sums yield well-defined operators, since on any element $v\in \HS(\alpha)$
just finitely many summands act non-trivially. Notice that for any $n\in\Z$ and
$v\in \HS(\alpha)$ we have $D_\alpha(n,m)v=0$ for sufficiently large $m$. We then obtain
\begin{equation}
  D_\alpha(n)=\sum_{m=0}^\infty (-1)^mD_\alpha(n,m)
\end{equation}
by direct computation. The same reasoning can be applied to check that the formal
inverse $K_\alpha(z)^{-1}$ yields well-defined operators on $\HS(\alpha)$.
For $k,n\in\Z$ we have
\begin{equation*}
  [L(k),D_\alpha(n)]=-(2k+n)D_\alpha(k+n).
\end{equation*}
Consider the operator
\begin{equation}
  E_\alpha=(D_\alpha(0)-1)(L(0)-1)+\sum_{n=1}^{\infty}
  (D_\alpha(-n)L(n)+L(-n)D_\alpha(n))
\end{equation}
on $\HS(\alpha)$, which plays a central role in the approach to the no-ghost theorem
presented in \cite{Fr85}. For $n\in\Z$ we have $[K_\alpha(n),E_\alpha]=-nK_\alpha(n)$
and furthermore
\begin{align}\label{JamesBond}
  \left[D_\alpha(n),E_\alpha\right]=-nD_\alpha(n)
  \text{ and } [L(n),E_\alpha]=-nL(n).
\end{align}
The proof of the third equation uses the fact that the Virasoro algebra acts
with central charge $26$ on $V\otimes V_{\hyp}$. 
  
For $v\in \HS(\alpha)$ and tuples $\lambda=(\lambda_1,\cdots,\lambda_n)$ and
$\mu=(\mu_1,\cdots,\mu_m)$ with $\lambda_i,\mu_j\in\Z_{\geq 0}$ we set
\begin{equation}
  v_{\lambda,\mu}=L(-1)^{\lambda_1}\cdots L(-n)^{\lambda_n}K_\alpha(-1)^{\mu_1}
  \cdots K_\alpha(-m)^{\mu_m}v.
\end{equation}
Except for the \emph{empty tuple} $\und{0}=()$, we will assume $\lambda_n\ne 0$
and $\mu_m\ne 0$. The space $\HS(\alpha)$ has some important subspaces:
\begin{enumerate}[(1)]
  \item $\KS(\alpha)$ is the subspace of $\HS(\alpha)$ annihilated by all
  $K_\alpha(n)$ for $n>0$.
  \item $\TS(\alpha)=\PS(\alpha) \cap \KS(\alpha)$ and
  $\TS^1(\alpha)=\PS^1(\alpha) \cap \KS(\alpha)$.
  \item $\GS(\alpha)$ is the span of all $t_{\lambda,\mu}$ for tuples $\lambda$
  and $\mu$ with $\sum_i \lambda_i+\sum_j \mu_j>0$ and $t\in \TS(\alpha)$.
  \item $\KS'(\alpha)$ is the subspace of $\GS(\alpha)$ generated by all
  $t_{\und{0},\mu}$ with $\mu\ne \und{0}$.
  \item $\SSp(\alpha)$ is the space of \emph{spurious} vectors in $\HS(\alpha)$,
  i.e. all vectors spanned by $t_{\lambda,\mu}$ with $\lambda\ne \und{0}$.
\end{enumerate}
For simplicity we write $Ve^{\alpha}=V\otimes \R e^{\alpha}$ and $N=\ker(\cdot,\cdot)_0$
for the kernel of the bilinear form of $\PS(\alpha)$. Spurious vectors are perpendicular
to $\PS(\alpha)$ and the operators $L(-m)$, $m>0$, preserve the space $\SSp(\alpha)$ of
such states.
\begin{lemma}[{Lemmas 5.1, 5.2 and 5.3 in \cite{Bo92}}]\label{VesperLynd}
  The restriction of the bilinear form $(\cdot,\cdot)_0$ to $\TS(\alpha)$ is
  non-degenerate and its kernel in $\KS(\alpha)$ is given by $\KS'(\alpha)$. 
  For any orthogonal basis $t_k$ of $\TS(\alpha)$ the vectors
  $(t_k)_{\lambda,\mu}$ form a basis of $\HS(\alpha)$. Furthermore:
  \begin{enumerate}[(1)]
    \item We have $\HS(\alpha)=\TS(\alpha)\oplus \GS(\alpha)$ and
	$\HS(\alpha)=\KS(\alpha)\oplus \SSp(\alpha)$.
	\item We also have $\KS(\alpha)=Ve^\alpha\oplus \KS'(\alpha)$, 
	$\KS(\alpha)=\TS(\alpha)\oplus \KS'(\alpha)$ and
	$\GS(\alpha)=\KS'(\alpha)\oplus \SSp(\alpha)$.
  \end{enumerate} 
\end{lemma}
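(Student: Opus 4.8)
The plan is to reduce the statement to two ingredients: the explicit Fock-space realisation $\HS(\alpha)=V\otimes S(\hat{\h}_-)\otimes e^\alpha$ together with the commuting family $K_\alpha(n)$, and the operator $E_\alpha$, whose relation $[L(n),E_\alpha]=-nL(n)$ — valid precisely because the Virasoro algebra acts with central charge $26$ — turns it into a descendant-counting operator. From these I will extract a Poincar\'e--Birkhoff--Witt basis of $\HS(\alpha)$ and squeeze the graded characters; the remaining decompositions are then bookkeeping, and non-degeneracy on $\TS(\alpha)$ follows abstractly.

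First I would analyse $\KS(\alpha)$ directly. Choosing a second isotropic vector $w'\in\h$ with $(w_\alpha,w')=1$ splits $\h=\R w_\alpha\oplus\R w'$, and since $w_\alpha$ is isotropic the modes $K_\alpha(n)=1\otimes w_\alpha(n)$ commute, annihilate $Ve^\alpha$ for $n>0$, and act on $S(\hat{\h}_-)$ by removing $w'(-n)$-excitations. Hence $\KS(\alpha)$ is exactly the subspace free of $w'$-excitations, namely the free module over $\R[K_\alpha(-1),K_\alpha(-2),\dots]$ generated by $Ve^\alpha$. Using $K_\alpha(n)^{*}=K_\alpha(-n)$ for $(\cdot,\cdot)_0$ together with the commutativity of the $K_\alpha$ and the fact that they kill $Ve^\alpha$, every vector carrying at least one $K_\alpha(-n)$ is null and orthogonal to all of $\KS(\alpha)$; combined with the non-degeneracy of $(\cdot,\cdot)_0$ on $Ve^\alpha$ (where it agrees with the form of $V$ up to a non-zero scalar from $V_{\hyp}$) this shows that the kernel of $(\cdot,\cdot)_0$ restricted to $\KS(\alpha)$ is precisely the span of these $K$-descendants. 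Finally, writing $\pi\colon\KS(\alpha)\to Ve^\alpha$ for the leading-term projection along the $K$-descendants, the relation $[L(m),K_\alpha(-m)]=mK_\alpha(0)=m$ lets me prove by an elementary induction on the $K$-level that $\pi$ is injective on $\TS(\alpha)$; in particular $\operatorname{char}\TS(\alpha)\le\operatorname{char}Ve^\alpha$, where $\operatorname{char}$ denotes the graded dimension with respect to $L(0)$, a power series in a formal variable $q$.

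The heart of the argument is the spanning statement $\HS(\alpha)=U(\mathfrak{a}_-)\,\TS(\alpha)$, where $\mathfrak{a}_-$ is the Lie algebra spanned by $L(-n)$ and $K_\alpha(-n)$ for $n>0$. Here I would use $E_\alpha$: since $D_\alpha(n)$ vanishes on $\KS(\alpha)$ for $n>0$ and $D_\alpha(0)=1$ there, one checks $E_\alpha\vert_{\TS(\alpha)}=0$, while $[L(-n),E_\alpha]=nL(-n)$ and $[K_\alpha(-n),E_\alpha]=nK_\alpha(-n)$ show that each creation operator raises the $E_\alpha$-eigenvalue by its level. The aim is to deduce that $E_\alpha$ is a non-negative grading operator with kernel $\TS(\alpha)$ whose positive eigenspaces lie in $U(\mathfrak{a}_-)\TS(\alpha)$, yielding $\HS(\alpha)=\TS(\alpha)\oplus\GS(\alpha)$ and the surjection $U(\mathfrak{a}_-)\otimes\TS(\alpha)\twoheadrightarrow\HS(\alpha)$. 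Comparing graded characters, $\operatorname{char}\HS(\alpha)=\operatorname{char}V\cdot q^{\alpha^2/2}\prod_{n\ge1}(1-q^n)^{-2}=\operatorname{char}Ve^\alpha\cdot\prod_{n\ge1}(1-q^n)^{-2}$, while the surjection forces $\operatorname{char}\HS(\alpha)\le\operatorname{char}\TS(\alpha)\cdot\prod_{n\ge1}(1-q^n)^{-2}$, i.e. $\operatorname{char}\TS(\alpha)\ge\operatorname{char}Ve^\alpha$. Together with the upper bound from injectivity of $\pi$ this squeezes $\operatorname{char}\TS(\alpha)=\operatorname{char}Ve^\alpha$, so the surjection is an isomorphism — giving the basis $(t_k)_{\lambda,\mu}$ — and $\pi\vert_{\TS(\alpha)}$ is an isomorphism.

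With the basis in hand the decompositions are combinatorial: the span of the $K$-descendants now coincides with $\KS'(\alpha)$, and sorting the $(t_k)_{\lambda,\mu}$ by whether $\lambda$ and/or $\mu$ is empty yields $\KS(\alpha)=Ve^\alpha\oplus\KS'(\alpha)$, $\KS(\alpha)=\TS(\alpha)\oplus\KS'(\alpha)$, $\GS(\alpha)=\KS'(\alpha)\oplus\SSp(\alpha)$ and $\HS(\alpha)=\KS(\alpha)\oplus\SSp(\alpha)$. For non-degeneracy on $\TS(\alpha)$ I would argue abstractly rather than recompute: if $t\in\TS(\alpha)$ pairs trivially with all of $\TS(\alpha)$, then since spurious vectors are orthogonal to $\PS(\alpha)\supseteq\TS(\alpha)$ and $\KS'(\alpha)$ lies in the kernel of $(\cdot,\cdot)_0\vert_{\KS(\alpha)}$, the vector $t$ is orthogonal to $\KS(\alpha)\oplus\SSp(\alpha)=\HS(\alpha)$ and hence vanishes by non-degeneracy of $(\cdot,\cdot)_0$ on $\HS(\alpha)$; the same decomposition identifies $\KS'(\alpha)$ as the full kernel in $\KS(\alpha)$. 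I expect the main obstacle to be exactly the spanning step $\HS(\alpha)=U(\mathfrak{a}_-)\TS(\alpha)$ via $E_\alpha$ — equivalently, the surjectivity of the leading-term map onto $Ve^\alpha$ — since this is the genuine no-ghost content and the one place where the value $26$ of the central charge is indispensable; by contrast the injectivity of that map is elementary and needs no constraint on the central charge.
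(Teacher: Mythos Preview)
Your plan is sound in its peripheral parts --- the Fock-space description of $\KS(\alpha)$, the injectivity of the leading-term map $\pi\colon\TS(\alpha)\to Ve^\alpha$, the bookkeeping of the direct-sum decompositions, and the abstract argument for non-degeneracy on $\TS(\alpha)$ are all correct and essentially match what one finds in \cite{Bo92} and \cite{Ju95}. The gap is precisely where you anticipate it, the spanning step, but your diagnosis of why it is hard is off. You want to use that $E_\alpha$ is diagonalisable with kernel $\TS(\alpha)$; however, in this paper (and in the standard references) those spectral facts are \emph{consequences} of the basis $(t_k)_{\lambda,\mu}$, not independent inputs. You have not supplied an a~priori reason for $E_\alpha$ to be semisimple on the finite-dimensional pieces $\HS^n(\alpha)$: self-adjointness with respect to $(\cdot,\cdot)_0$ is not enough since that form is indefinite, and the commutator relations $[L(-n),E_\alpha]=nL(-n)$, $[K_\alpha(-n),E_\alpha]=nK_\alpha(-n)$ (incidentally, these show the eigenvalues are non-\emph{positive} and creation operators \emph{lower} them, not raise them) only tell you how eigenvalues shift, not that Jordan blocks are absent. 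Without diagonalisability you cannot conclude $\HS(\alpha)=\ker E_\alpha\oplus\operatorname{im}E_\alpha$, and the character squeeze never gets started.

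There is also a conceptual point worth correcting: Lemma~\ref{VesperLynd} does \emph{not} require central charge $26$. In Borcherds' and Jurisich's treatment the argument runs in the opposite direction from yours: one first proves \emph{linear independence} of the $(t_k)_{\lambda,\mu}$ by ordering the multi-indices and showing the Gram matrix with respect to $(\cdot,\cdot)_0$ is block-triangular with non-zero diagonal blocks --- an argument using only the relations $[L(n),K_\alpha(m)]=-mK_\alpha(n+m)$ and $[K_\alpha(n),K_\alpha(m)]=0$, valid for any central charge --- and then obtains \emph{spanning} from the character identity $\operatorname{char}\HS(\alpha)=\operatorname{char}Ve^\alpha\cdot\prod_{n\ge1}(1-q^n)^{-2}$ together with the already-established bound $\operatorname{char}\TS(\alpha)\le\operatorname{char}Ve^\alpha$. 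The constraint $c=26$ enters only at the next stage (Lemma~\ref{ReneMathis}), exactly where the paper invokes $E_\alpha$. So the remedy is to replace your $E_\alpha$-based spanning argument by the Gram-matrix linear-independence argument; the rest of your outline then goes through.
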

\noindent
Proofs for these statements can be found in the literature cited above. See in
particular \cite{Ju95} for a detailed discussion.
  
From $D_\alpha(0)\vert_{\TS(\alpha)}=1$ we deduce $E_\alpha t=0$ for all
$t\in \TS(\alpha)$. The commutator relations in (\ref{JamesBond}) imply that
the vectors $t_{\lambda,\mu}$, with $t\in\TS(\alpha)$, are eigenvectors of the
operator $E_\alpha$, more precisely
\begin{equation}
  E_\alpha t_{\lambda,\mu}=-(\lambda_1+2\lambda_2+\cdots+n\lambda_n+
  \mu_1+2\mu_2+\cdots+m\mu_m)t_{\lambda,\mu}.
\end{equation}
We fix an orthogonal basis $t_k$ of $\TS(\alpha)$ and consider the corresponding
basis of eigenvectors $(t_k)_{\lambda,\mu}$ of $\HS(\alpha)$. Observe that most
of the subspaces of $\HS(\alpha)$ defined above have bases of eigenvectors of $E_\alpha$.
  
The direct decompositions in Lemma \ref{VesperLynd} induce projection maps
\begin{equation}\label{Goldfinger}
  p_{\TS}: \HS(\alpha)\to \TS(\alpha)\quad \text{and}\quad
  p_{V}: \HS(\alpha)\to Ve^\alpha,
\end{equation}
which both have kernel $\GS(\alpha)$. For simplicity we suppress the weight $\alpha$
for those maps. Clearly the restriction $p_\TS\vert_{Ve^\alpha}:Ve^\alpha\to\TS(\alpha)$
is a bijective linear map with inverse $p_V\vert_{\TS(\alpha)}$. Since $\TS(\alpha)$
is the $0$-eigenspace of $E_\alpha$, we can describe $p_\TS$ explicitly. In fact,
for any $v\in \HS(\alpha)$ we have
\begin{equation}\label{M}
  p_{\TS} v=\lim_{d\to\infty}\frac{1}{d!}\left(\prod_{i=1}^d(E_\alpha+i)\right)v.
\end{equation}
This limit makes sense because for any $v\in\HS(\alpha)$ the corresponding sequence
becomes stationary for large $d$. 
\begin{lemma}[{Lemma 5.5 in \cite{Bo92}}]\label{ReneMathis}
  The space $\PS^1(\alpha)$ is the direct sum of $\TS^1(\alpha)$ and $N^1$.
\end{lemma}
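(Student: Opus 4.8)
The plan is to realise the decomposition through the projection $p_\TS$ and then to identify the complementary summand of $\TS^1(\alpha)$ with the null space $N^1$. Since $[L(0),E_\alpha]=0$ by \eqref{JamesBond}, the operator $p_\TS$ from \eqref{M} commutes with $L(0)$ and hence preserves the weight grading. Because $\TS^1(\alpha)=\PS^1(\alpha)\cap\KS(\alpha)\subseteq\PS^1(\alpha)$ and $p_\TS$ restricts to the identity on $\TS(\alpha)$, the restriction $p_\TS|_{\PS^1(\alpha)}$ is an idempotent with image $\TS^1(\alpha)$ and kernel $\PS^1(\alpha)\cap\GS(\alpha)$. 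This already gives $\PS^1(\alpha)=\TS^1(\alpha)\oplus\big(\PS^1(\alpha)\cap\GS(\alpha)\big)$, so everything reduces to proving $\PS^1(\alpha)\cap\GS(\alpha)=N^1$.

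For the inclusion $N^1\subseteq\PS^1(\alpha)\cap\GS(\alpha)$ I would first record that $\GS(\alpha)\perp\TS(\alpha)$ with respect to $(\cdot,\cdot)_0$: by Lemma \ref{VesperLynd} we have $\GS(\alpha)=\KS'(\alpha)\oplus\SSp(\alpha)$, where $\KS'(\alpha)$ is the radical of $(\cdot,\cdot)_0$ on $\KS(\alpha)\supseteq\TS(\alpha)$ and $\SSp(\alpha)$ is perpendicular to $\PS(\alpha)\supseteq\TS(\alpha)$. Now take $n\in N^1$ and write $n=p_\TS n+(n-p_\TS n)$ with $p_\TS n\in\TS(\alpha)$ and $n-p_\TS n\in\GS(\alpha)$. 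Pairing against an arbitrary $t\in\TS(\alpha)\subseteq\PS(\alpha)$, the term $n-p_\TS n$ drops out by the orthogonality just noted, while $(n,t)_0=0$ because $n$ lies in the radical of $(\cdot,\cdot)_0$ on $\PS(\alpha)$. Hence $(p_\TS n,t)_0=0$ for all $t\in\TS(\alpha)$, and non-degeneracy on $\TS(\alpha)$ forces $p_\TS n=0$, that is $n\in\GS(\alpha)$.

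The heart of the argument, and the step I expect to be the main obstacle, is the reverse inclusion $\PS^1(\alpha)\cap\GS(\alpha)\subseteq\SSp(\alpha)$, where both hypotheses must be used: that we are in weight $1$, and that the central charge is $26$, the latter being hidden in the relation $[L(n),E_\alpha]=-nL(n)$ of \eqref{JamesBond} that underlies the spectral behaviour of $E_\alpha$. The key observation is that $E_\alpha$ sends physical weight-$1$ vectors into spurious ones: for $g\in\PS^1(\alpha)$ the factor $(L(0)-1)$ annihilates $g$, so the first summand of $E_\alpha$ vanishes, and $L(n)g=0$ for $n>0$ kills every summand $D_\alpha(-n)L(n)$, leaving $E_\alpha g=\sum_{n\geq1}L(-n)D_\alpha(n)g$. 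This is a finite sum, since the weights of $\HS(\alpha)$ are bounded below, and each of its terms lies in $\SSp(\alpha)$: prepending $L(-n)$ to any standard monomial $(t_k)_{\lambda,\mu}$ and reordering the Virasoro modes into increasing index produces only monomials with a non-trivial $L$-part, so $L(-n)\HS(\alpha)\subseteq\SSp(\alpha)$ for $n>0$. Thus $E_\alpha g\in\SSp(\alpha)$.

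To finish I would use that $E_\alpha$ acts diagonally in the eigenbasis $(t_k)_{\lambda,\mu}$ of Lemma \ref{VesperLynd}, so it preserves $\SSp(\alpha)$ and restricts to an invertible operator on $\GS(\alpha)$ whose inverse again preserves $\SSp(\alpha)$. For $g\in\PS^1(\alpha)\cap\GS(\alpha)$ this yields $g=E_\alpha^{-1}(E_\alpha g)\in\SSp(\alpha)$, establishing the inclusion. Since spurious vectors are perpendicular to $\PS(\alpha)$, such a $g$ then lies in the radical $N^1$; together with the previous paragraph this gives $\PS^1(\alpha)\cap\GS(\alpha)=N^1$, and hence $\PS^1(\alpha)=\TS^1(\alpha)\oplus N^1$. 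Directness of the sum is automatic from the idempotent in the first step.
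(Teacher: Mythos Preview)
Your argument is correct and rests on the same key observation as the paper's proof: for $p\in\PS^1(\alpha)$ one has $E_\alpha p=\sum_{n\ge1}L(-n)D_\alpha(n)p\in\SSp(\alpha)$, which is precisely where the central charge $26$ enters via \eqref{JamesBond}. The only difference is organisational: the paper starts from the decomposition $\HS(\alpha)=\KS(\alpha)\oplus\SSp(\alpha)$, writes $p=k+s$, and deduces $E_\alpha k\in\SSp(\alpha)\cap\KS(\alpha)=0$, whence $k\in\TS^1(\alpha)$ and $s\in\PS^1(\alpha)\cap\SSp(\alpha)\subseteq N^1$; you instead start from $\HS(\alpha)=\TS(\alpha)\oplus\GS(\alpha)$ via $p_\TS$ and then invert $E_\alpha$ on $\GS(\alpha)$ to pull $g$ back into $\SSp(\alpha)$. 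Your route requires the extra (easy) verification that $N^1\subseteq\GS(\alpha)$, which the paper avoids, but both arguments are equally short and pivot on exactly the same spectral behaviour of $E_\alpha$.
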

\begin{proof}
  We have to show that each $p\in \PS^1(\alpha)$ can be written as $t+n$ for unique
  elements $t\in \TS^1(\alpha)$ and $n\in N^1$. We have $p=k+s$ for $k\in \KS^1(\alpha)$
  and $s\in \SSp^1(\alpha)$. Since $\SSp(\alpha)$ is spanned by elements $t_{\lambda,\mu}$
  with $\lambda\ne 0$ and $t\in \TS(\alpha)$, it is clear that the operator $E_\alpha$
  preserves spurious states. This means $E_\alpha s\in \SSp(\alpha)$ for each
  $s\in \SSp(\alpha)$. For $p\in \PS^1(\alpha)$ we get
  \begin{equation*}
    E_\alpha p=\sum_{m=1}^\infty L(-m)D_\alpha(m)p\in \SSp(\alpha).
  \end{equation*}
  Together this implies $E_\alpha k=E_\alpha p-E_\alpha s\in \SSp(\alpha)$. Since
  $E_\alpha$ preserves $\KS(\alpha)$ this yields $E_\alpha k=0$ as we have
  $\SSp(\alpha)\cap \KS(\alpha)=\{0\}$. Hence $k\in \TS^1(\alpha)$ and we get
  $s=p-k\in \PS^1(\alpha)$ with $\TS^1(\alpha)\subset \PS^1(\alpha)$. Altogether
  $s\in \PS^1(\alpha)\cap \SSp^1(\alpha)$, which implies $s\in N^1$. Notice that the
  restriction of $(\cdot,\cdot)_0$ to $\TS^1(\alpha)$ is non-degenerate and
  so $\TS^1(\alpha)\cap N^1=\{0\}$.
\end{proof}
\noindent
This proof is similar to the one in \cite{Dr22} but differs from those given in
\cite{Bo92} or \cite{Ju95}. Yet for all these proofs it is crucial that the Virasoro
algebra acts with central charge $26$ on $V\otimes V_{\hyp}$. Here that fact is hidden
in the properties of the operator $E_\alpha$. See the remark after (\ref{JamesBond}). 
\begin{theorem}[no-ghost theorem]
  For every $0\ne\alpha\in\hyp$ the linear isomorphism
  \begin{equation*}
  	\eta_\alpha: V_{1-\alpha^2/2}\to \mathfrak{g}(V)_\alpha=\PS^1(\alpha)/N^1,\,
    v\mapsto [p_\TS(v\otimes e^\alpha)],
  \end{equation*}
  preserves the $G_V$-action and the bilinear forms. More precisely for all
  $v,w\in V_{1-\alpha^2/2}$ we have $(v,w)=(\eta_\alpha(v),\eta_\alpha(w))_0$.
  The linear isomorphism
  \begin{equation*}
    \eta_0: V_1\oplus\hyp\otimes\R \to \g(V)_{0},\, v+\alpha\mapsto v\otimes
    e^0+\vac\otimes \alpha(-1)e^0,
  \end{equation*}
  has the same properties, where $G_V$ acts trivially on $\hyp\otimes \R$.
\end{theorem}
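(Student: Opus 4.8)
The plan is to exhibit $\eta_\alpha$ (for $0\neq\alpha$) as a composite of three linear isomorphisms and then check the two extra structures separately. First I would record that for $v\in V_{1-\alpha^2/2}$ the vector $v\otimes e^\alpha$ has $L(0)$-weight $(1-\alpha^2/2)+\alpha^2/2=1$, so $v\mapsto v\otimes e^\alpha$ identifies $V_{1-\alpha^2/2}$ with the weight-$1$ subspace $V_{1-\alpha^2/2}\otimes\R e^\alpha$ of $Ve^\alpha$. Since $[L(0),E_\alpha]=0$ by \eqref{JamesBond}, the projection $p_\TS$ of \eqref{M} commutes with $L(0)$ and hence preserves weights; as the excerpt already notes that $p_\TS\vert_{Ve^\alpha}\colon Ve^\alpha\to\TS(\alpha)$ is bijective, its restriction to weight $1$ is a linear isomorphism $V_{1-\alpha^2/2}\to\TS^1(\alpha)$. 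Finally Lemma \ref{ReneMathis} gives $\PS^1(\alpha)=\TS^1(\alpha)\oplus N^1$, so the quotient map restricts to an isomorphism $\TS^1(\alpha)\to\PS^1(\alpha)/N^1=\g(V)_\alpha$. Composing the three maps recovers $\eta_\alpha$ and shows it is a linear isomorphism.

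For the $G_V$-equivariance I would use that $g\in G_V$ acts as $g\otimes1$ and therefore commutes with every $L(n)$ and with each $K_\alpha(n)=1\otimes w_\alpha(n)$, hence with $D_\alpha(n)$, with $E_\alpha$, and so with $p_\TS$. Since $g\otimes1$ leaves $e^\alpha$ untouched and preserves $N^1$ (the action is isometric for $(\cdot,\cdot)_0$ and preserves $\PS^1(\alpha)$), we get $\eta_\alpha(gv)=[p_\TS(gv\otimes e^\alpha)]=[\,g\,p_\TS(v\otimes e^\alpha)]=g\,\eta_\alpha(v)$, so the action is preserved.

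The substantive point is the bilinear form. I would first reduce the contravariant form on $\TS^1(\alpha)$ to the form on $Ve^\alpha$: writing $t=p_\TS(v\otimes e^\alpha)$, the relation $p_V\circ p_\TS\vert_{Ve^\alpha}=\mathrm{id}$ gives $t=v\otimes e^\alpha+k$ with $k\in\KS'(\alpha)$, and since Lemma \ref{VesperLynd} identifies $\KS'(\alpha)$ with the radical of $(\cdot,\cdot)_0$ on $\KS(\alpha)\supseteq Ve^\alpha$, all cross terms vanish and $(t,t')_0=(v\otimes e^\alpha,w\otimes e^\alpha)_0$. It then remains to evaluate this on the tensor factors. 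Using $(\cdot,\cdot)_0=-(\cdot,\theta(\cdot))$, the involution $\theta(e^\alpha)=(-1)^{\alpha^2/2}\epsilon(\alpha,-\alpha)e^{-\alpha}$, and the factorisation of the form, I would compute $(e^\alpha,e^{-\alpha})_{V_{\hyp}}$ from the invariance axiom applied to $\Gamma_\alpha(z)$ of \eqref{Octopussy}, obtaining $-(-1)^{\alpha^2/2}\epsilon(\alpha,-\alpha)$ with $(\vac,\vac)_{V_{\hyp}}=-1$; the two sign factors then cancel because $\hyp$ is even, i.e. $\alpha^2\in2\Z$ and $\epsilon(\alpha,-\alpha)^2=1$, leaving $(\eta_\alpha(v),\eta_\alpha(w))_0=(v,w)$. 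I expect this evaluation of $(e^\alpha,e^{-\alpha})_{V_{\hyp}}$, together with the bookkeeping of the signs and cocycle values, to be the main obstacle.

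For $\alpha=0$ the geometry is different and I would argue directly. The key preliminary observation is that CFT-type forces $L_1\vert_{V_1}=0$: for $v\in V_1$ one has $L_1v\in V_0=\R\vac$ and $(L_1v,\vac)_V=(v,L_{-1}\vac)_V=0$, so $L_1v=0$. Consequently $v\otimes e^0\in\PS^1(0)$ for every $v\in V_1$, and a weight count gives $\PS^1(0)=V_1\otimes e^0\oplus\vac\otimes\h\,e^0$, so $\eta_0$ is visibly a linear bijection onto $\PS^1(0)$. I would then compute the form using $\theta(e^0)=e^0$, $\theta(h(n))=-h(n)$, the weight-orthogonality of the two factors, and $(\alpha(-1)e^0,\beta(-1)e^0)_{V_{\hyp}}=(\alpha,\beta)$ (which follows from the adjoint rule $h(n)^\ast=-h(-n)$ and $(\vac,\vac)_{V_{\hyp}}=-1$), obtaining $(\eta_0(v+\alpha),\eta_0(v'+\beta))_0=(v,v')_V+(\alpha,\beta)$. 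This matches the (non-degenerate) form on $V_1\oplus\hyp\otimes\R$ and simultaneously shows that $(\cdot,\cdot)_0$ is non-degenerate on $\PS^1(0)$, whence $N^1=0$ and $\eta_0$ is an isomorphism onto $\g(V)_0$. The $G_V$-equivariance is immediate once $g\vac=\vac$, since $g\otimes1$ then fixes $\vac\otimes\alpha(-1)e^0$, matching the trivial action on $\hyp\otimes\R$.
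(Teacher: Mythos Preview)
Your proposal is correct and follows essentially the same route the paper indicates: the paper does not spell out a proof but says it is ``a combination of the lemmas above and an explicit calculation with the bilinear forms,'' and that is precisely what you do---factor $\eta_\alpha$ through $p_\TS\vert_{Ve^\alpha}$ and Lemma~\ref{ReneMathis}, then use the radical description $\KS'(\alpha)=\ker(\cdot,\cdot)_0\vert_{\KS(\alpha)}$ from Lemma~\ref{VesperLynd} to reduce the form computation to $(v\otimes e^\alpha,w\otimes e^\alpha)_0$, which you evaluate directly. The $\alpha=0$ case and the $G_V$-equivariance are handled in the straightforward way one expects.

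One small remark on the $\alpha=0$ equivariance: you correctly note that the argument needs $g\vac=\vac$, but you should observe that this does not follow from the definition of $G_V$ alone (for instance $-\mathrm{id}_V\in G_V$ sends $\vac$ to $-\vac$). So strictly speaking the assertion about the $G_V$-action on $\g(V)_0$ holds only for the index-two subgroup fixing $\vac$, which of course contains $\Aut(V)$; this is a harmless imprecision in the statement rather than a flaw in your argument.
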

\noindent
This version of the no-ghost theorem was formulated in \cite{Bo92}. The proof is a
combination of the lemmas above and an explicit calculation with the bilinear forms.
The maps $\eta_\alpha$ are called the \emph{no-ghost isomorphisms}.

\section{Lie brackets and vertex operators}
In this section we consider the Lie algebra of physical states $\mathfrak{g}(V)$
of a vertex operator algebra $V$ of central charge $24$ of CFT-type
with a non-degenerate, invariant bilinear form and apply the no-ghost theorem
to express its Lie bracket explicitly in terms of vertex algebra operations.
  
Let $V$ be a vertex operator algebra of central charge $24$ of CFT-type with a
non-degenerate, invariant bilinear form. We denote its Lie algebra of physical
states by $\mathfrak{g}(V)$. The no-ghost theorem allows us to identify weight spaces
of the vertex operator algebra $V$ with weight spaces of the $\hyp$-grading of $\g(V)$.  
This induces bilinear maps
\begin{align*}
  \{\cdot,\cdot\}_{\alpha,\beta}: V_{1-\alpha^2/2}\times V_{1-\beta^2/2}&\to 
  V_{1-(\alpha+\beta)^2/2}, \\(v,w)&\mapsto \epsilon(\alpha,\beta)\eta_{\alpha+\beta}^{-1}
  ([\eta_\alpha(v),\eta_\beta(w)]).
\end{align*}
We introduce the scaling $\epsilon(\alpha,\beta)$ to make the map
$\{\cdot,\cdot\}_{\alpha,\beta}$ independent of the choice of the cocycle $\epsilon$.

We fix a basis $e,f$ of the lattice $\hyp$ with $e^2=f^2=0$ and $(e,f)=1$. An
element $\alpha\in\hyp$ can be written as $\alpha=(\alpha,f)e+(\alpha,e)f$
and satisfies $\alpha^2/2=(\alpha,e)(\alpha,f)$. 
  
The discussion in the previous section showed that the no-ghost isomorphisms are not
entirely without ambiguity because they depend on a choice of isotropic element
$w_\alpha\in\h$ with $(\alpha,w_\alpha)=1$. We have to make these choices in a consistent way.
For any $\alpha\in\hyp$ with $(\alpha,f)\ne 0$ we set $w_\alpha=\frac{1}{(\alpha,f)}f$.
Therefore we restrict the evaluation of $\{\cdot,\cdot\}_{\alpha,\beta}$ to the case
$\alpha,\beta,\alpha+\beta\notin f^\perp$.
For all $n\in\Z$ the numbers  $x_{\alpha,\beta}=(w_\alpha,\beta)\in\Q$ satisfy
\begin{equation}\label{Moneypenny}
  K_\alpha(n)=x_{\alpha,\beta}K_\beta(n).
\end{equation}
In particular we have $x_{\alpha,\alpha}=1$ and $x_{\alpha,\beta}x_{\beta,\alpha}=1$.
 
For the Monster Lie algebra $\mathfrak{m}=\g(V^\natural)$, the above
assumptions impose no restrictions since there are no isotropic roots.
  
The first step in our examination will be to evaluate the expressions
\begin{equation}\label{Solitaire}
  p_{\TS}(v\otimes e^\alpha)_0p_{\TS}(w\otimes e^\beta)\mod \GS(\alpha+\beta)
\end{equation}
for all $v\in V_{1-\alpha^2/2}$ and $w\in V_{1-\beta^2/2}$  such that we can compute
their projections onto $Ve^{\alpha+\beta}$ explicitly.
  
For $k\in\Z_{\geq 0}$ and $j\in\{0,\cdots,k\}$ we define polynomials
$p^k_j\in\mathbb{Z}[T]$ by
\begin{enumerate}[(1)]
  \item $p^k_0=T^k$,
  \item $p^k_j=Tp^{k-1}_j+p^{k-1}_{j-1}$ for $j\in \{1,\cdots,k-2\}$,
  \item $p^k_{k-1}=kT$ and $p^k_k=1$.
\end{enumerate}
We give a generalisation of (\ref{JamesBond}) for the operators $D_\alpha(n)$.
\begin{lemma}\label{DrNo}
  Take $0\ne \alpha\in \hyp$. For  $k,n\in\Z$, $k\geq 0$, we have
  \begin{equation*}
    [D_\alpha(n),E_\alpha^k]=-\sum_{j=0}^{k-1}p^k_j(n)D_\alpha(n)E_\alpha^j.
  \end{equation*}
\end{lemma}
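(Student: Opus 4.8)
The plan is to reduce the whole statement to a single intertwining relation between $D_\alpha(n)$ and $E_\alpha$, after which the coefficients fall out of the binomial theorem. The starting point is the relation $[D_\alpha(n),E_\alpha]=-nD_\alpha(n)$ from (\ref{JamesBond}), which I would rewrite as
\begin{equation*}
  E_\alpha D_\alpha(n)=D_\alpha(n)(E_\alpha+n).
\end{equation*}
In other words, moving $D_\alpha(n)$ to the left past $E_\alpha$ shifts $E_\alpha$ to $E_\alpha+n$, where $n$ is a scalar commuting with $E_\alpha$. This is the conceptual heart of the lemma, and recognising it is what makes the operator ordering manageable.

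First I would iterate this relation. A short induction on $k$ — the case $k=1$ being the displayed relation and the inductive step obtained by applying it once more, using that $n$ is central — gives
\begin{equation*}
  E_\alpha^k D_\alpha(n)=D_\alpha(n)(E_\alpha+n)^k
\end{equation*}
as an identity of genuine, well-defined operators on $\HS(\alpha)$, so no convergence issue arises. Consequently
\begin{equation*}
  [D_\alpha(n),E_\alpha^k]=D_\alpha(n)E_\alpha^k-E_\alpha^k D_\alpha(n)
  =D_\alpha(n)\bigl(E_\alpha^k-(E_\alpha+n)^k\bigr).
\end{equation*}
Expanding $(E_\alpha+n)^k$ by the binomial theorem — legitimate since the scalar $n$ commutes with $E_\alpha$ — the leading term $E_\alpha^k$ cancels, leaving $E_\alpha^k-(E_\alpha+n)^k=-\sum_{j=0}^{k-1}\binom{k}{j}n^{k-j}E_\alpha^j$, and hence $[D_\alpha(n),E_\alpha^k]=-\sum_{j=0}^{k-1}\binom{k}{j}n^{k-j}D_\alpha(n)E_\alpha^j$. (The case $k=0$ is covered automatically, both sides being zero via the empty sum.)

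It then remains to identify the coefficients, and here the only real bookkeeping obstacle is matching the abstractly defined polynomials $p^k_j$ to binomial coefficients. I claim $p^k_j(T)=\binom{k}{j}T^{k-j}$, which I would verify directly against the defining data: the identities $p^k_0=T^k$, $p^k_{k-1}=kT$ and $p^k_k=1$ match $\binom{k}{0}T^k$, $\binom{k}{k-1}T$ and $\binom{k}{k}T^0$, while the recursion $p^k_j=Tp^{k-1}_j+p^{k-1}_{j-1}$ becomes $\binom{k}{j}=\binom{k-1}{j}+\binom{k-1}{j-1}$, i.e. Pascal's rule. Evaluating at $T=n$ gives $p^k_j(n)=\binom{k}{j}n^{k-j}$, which is exactly the claimed formula. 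I expect no genuine difficulty beyond this combinatorial identification; one could instead run a direct induction on $k$ using the Leibniz rule $[A,BC]=[A,B]C+B[A,C]$ together with the recursion for $p^k_j$, but that route forces one to re-commute $D_\alpha(n)$ through $E_\alpha$ at each stage and is considerably less transparent than the intertwining argument above.
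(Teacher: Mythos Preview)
Your proof is correct and in fact cleaner than the paper's. The key observation $E_\alpha D_\alpha(n)=D_\alpha(n)(E_\alpha+n)$, iterated to $E_\alpha^k D_\alpha(n)=D_\alpha(n)(E_\alpha+n)^k$, together with the identification $p^k_j(T)=\binom{k}{j}T^{k-j}$, settles the lemma immediately; both the operator identity and the combinatorial check against the defining recursion are sound.

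The paper instead takes exactly the route you dismiss at the end: a direct induction on $k$ via the Leibniz expansion $[D_\alpha(n),E_\alpha^{k+1}]=[D_\alpha(n),E_\alpha]E_\alpha^k+E_\alpha[D_\alpha(n),E_\alpha^k]$, re-commuting $D_\alpha(n)$ through the lone $E_\alpha$ at each step and then matching coefficients against the recursion $p^{k+1}_j=Tp^k_j+p^k_{j-1}$. That argument never identifies $p^k_j$ in closed form --- it works purely with the recursive definition --- so it stays closer to how the polynomials are introduced but is more laborious. Your intertwining argument is shorter and, as a bonus, uncovers the closed form $p^k_j(T)=\binom{k}{j}T^{k-j}$, which the paper does not record.
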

\begin{proof}
  Fix any $n\in\Z$. We give a proof by induction over $k$. For $k=0$ the statement is
  trivial and for $k=1$ the statement is given in (\ref{JamesBond}). By assumption
  the statement holds for $k$, hence   
  \begin{align*}
    [D_\alpha(n)&,E_\alpha^{k+1}]=
    [D_\alpha(n),E_\alpha]E_\alpha^k+E_\alpha[D_\alpha(n),E_\alpha^k]\\
    =&-p^1_0(n)D_\alpha(n)E_\alpha^k-\sum_{j=0}^{k-1}p^k_j(n)
    E_\alpha D_\alpha(n)E_\alpha^j\\
    =&-p^1_0(n)D_\alpha(n)E_\alpha^k-\sum_{j=0}^{k-1}p^k_j(n)(nD_\alpha(n)E_\alpha^j
    +D_\alpha(n)E_\alpha^{j+1})\\
    =&-p^1_0(n)D_\alpha(n)E^k_\alpha
    -\sum_{j=1}^{k-1}\left[p^k_j(n)nD_\alpha(n)E_\alpha^j+p^k_{j-1}(n)
    D_\alpha(n)E_\alpha^j\right]\\
    &-p^k_0(n)nD_\alpha(n)E_\alpha^0-p^k_{k-1}(n)D_\alpha(n)E_\alpha^k\\
    =&-p^k_0(n)nD_\alpha(n)E_\alpha^0
    -\sum_{j=1}^{k-1}\left(p^k_j(n)n+p^k_{j-1}(n)\right)D_\alpha(n)E_\alpha^j\\
    &-(p^k_{k-1}(n)+p^1_0(n))D_\alpha(n)E_\alpha^k\\
    =&-\sum_{j=0}^{k}p^{k+1}_j(n)D_\alpha(n)E_\alpha^j.
  \end{align*}
  This is the statement of the lemma for $k+1$.
\end{proof}
\noindent
We remark that this result holds for all isotropic vectors $w_\alpha\in\h$ which
satisfy $(w_\alpha,\alpha)=1$.
    
Some of the later results are obtained by taking certain sums over sets of tuples
of integers. We have already seen this in the definition of the operators $D_\alpha(n,m)$.
To simplify notations we make some further definitions. Take $m,n\in\Z$, $m>0$. Set
\begin{equation*}
  B^m(n)=\{\und{n}=(n_1,\cdots,n_m)\in\Z_{>0}^m:n_1+\cdots+n_m=n\}
\end{equation*}
and in addition set $B^0(n)=\emptyset$ for all $n\ne0$ and $B^0(0)=\{\und{0}\}$.
Recall that by $\und{0}$ we denote the empty tuple $()$. Clearly we have
$B^m(n)=\emptyset$, whenever $m>n$, so that
\begin{equation*}
  B(n)=\bigcup_{m=0}^\infty B^m(n)=\bigcup_{m=0}^{n}B^m(n).
\end{equation*}
The cardinalities of those sets will be relevant later. We introduce polynomials
\begin{equation*}
  b_n=\sum_{m=0}^n\vert B^m(n)\vert T^m,
\end{equation*}
which in particular satisfy $\vert B(n)\vert=b_n(1)$.
  
Since $V\otimes V_{\hyp}$ is a conformal vertex algebra we can consider the modes of
the element $\vac\otimes w_\alpha(-1)e^0$, i.e.
\begin{equation*}
  Y(\vac\otimes w_\alpha(-1)e^0,z)=\sum_{n\in\Z}(\vac\otimes w_\alpha(-1)e^0)(n)z^{-n-1}.
\end{equation*}
With the vacuum axiom $Y(\vac,z)=1$ we compute
\begin{equation}
  K_\alpha(n)=1\otimes w_\alpha(n)=(\vac\otimes w_\alpha(-1)e^0)(n).
\end{equation}
This simple (and well-known) observation allows us to apply the Borcherds identity
to those operators.
\begin{lemma}\label{Moonraker}
  Take $n,k\in\mathbb{Z}$, $n\geq 0$. For all $x\in \KS(\alpha)$ and
  $y\in \KS(\beta)$ we have
  \begin{align*}
    (D_\alpha(-n)x)_ky&=(-1)^n b_n(x_{\alpha,\beta})x_{k-n}y \mod \GS(\alpha+\beta)
    \text{ and}\\
	x_k(D_\beta(-n)y)&=b_n(x_{\beta,\alpha})x_{k-n}y \mod \GS(\alpha+\beta).
  \end{align*}
\end{lemma}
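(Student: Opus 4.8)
The plan is to turn $D_\alpha(-n)x$ into an explicit finite sum and then strip off the Heisenberg modes one at a time, always reducing modulo $\GS(\alpha+\beta)$. First I would note that the operators $K_\alpha(j)$ pairwise commute and annihilate $\KS(\alpha)$ for $j>0$, so in each product $K_\alpha(j_1)\cdots K_\alpha(j_m)$ appearing in $D_\alpha(-n,m)$ only the summands with all $j_i<0$ act non-trivially on $x\in\KS(\alpha)$. Setting $j_i=-\tilde n_i$ with $\tilde n_i>0$ and $\tilde n_1+\cdots+\tilde n_m=n$ identifies the surviving index set with $B^m(n)$, giving
\begin{equation*}
  D_\alpha(-n)x=\sum_{m=0}^n(-1)^m\sum_{\und{\tilde n}\in B^m(n)}
  K_\alpha(-\tilde n_1)\cdots K_\alpha(-\tilde n_m)\,x.
\end{equation*}
Since $K_\alpha(j)=(\vac\otimes w_\alpha(-1)e^0)(j)=:u_j$, the problem reduces to evaluating $(u_{-\tilde n_1}\cdots u_{-\tilde n_m}x)_k\,y$ modulo $\GS(\alpha+\beta)$.

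For the first identity I would apply the Borcherds identity in its associativity form (the case $m=0$ of the displayed identity),
\begin{equation*}
  (u_{l}X)_k=\sum_{i\geq 0}\binom{l}{i}\Big((-1)^i u_{l-i}X_{k+i}
  -(-1)^{i+l}X_{k+l-i}u_i\Big),
\end{equation*}
with $l=-\tilde n_1$, to remove the outermost mode from $X=u_{-\tilde n_2}\cdots u_{-\tilde n_m}x$. The structural input I rely on is that every raising mode $K_\alpha(-p)$ with $p>0$ sends $\HS(\alpha+\beta)$ into $\GS(\alpha+\beta)$; this follows from Lemma \ref{VesperLynd} via the proportionality $K_\alpha(-p)=x_{\alpha,\alpha+\beta}K_{\alpha+\beta}(-p)$ on $\HS(\alpha+\beta)$ and the relations $[L(i),K_{\alpha+\beta}(j)]=-jK_{\alpha+\beta}(i+j)$. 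Because $-\tilde n_1-i<0$ for all $i\geq 0$, the whole family $u_{-\tilde n_1-i}X_{k+i}y$ lies in $\GS(\alpha+\beta)$ and is discarded. In the second family only $i=0$ contributes, since $u_iy=K_\alpha(i)y=x_{\alpha,\beta}K_\beta(i)y$ vanishes for $i>0$ and equals $x_{\alpha,\beta}y$ for $i=0$ on $\KS(\beta)$. This produces the single-step recursion
\begin{equation*}
  (u_{-\tilde n_1}X)_k\,y\equiv -(-1)^{\tilde n_1}x_{\alpha,\beta}\,X_{k-\tilde n_1}y
  \mod\GS(\alpha+\beta).
\end{equation*}

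Iterating over all $m$ modes --- consistent modulo $\GS(\alpha+\beta)$ since the discarded parts stay in this subspace --- contributes a factor $-(-1)^{\tilde n_i}x_{\alpha,\beta}$ per step and lowers the index by $\tilde n_i$, so that $(u_{-\tilde n_1}\cdots u_{-\tilde n_m}x)_k y\equiv(-1)^m(-1)^nx_{\alpha,\beta}^{\,m}x_{k-n}y$. Multiplying by the weight $(-1)^m$ from the definition of $D_\alpha(-n)$ cancels $(-1)^{2m}$, and summing over $B^m(n)$ and $m$ yields $(-1)^n\sum_m\vert B^m(n)\vert x_{\alpha,\beta}^{\,m}x_{k-n}y=(-1)^nb_n(x_{\alpha,\beta})x_{k-n}y$, the first claim. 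For the second identity I would instead use the commutator form $[K_\beta(-\tilde n_i),x_k]=\sum_{i'\geq 0}\binom{-\tilde n_i}{i'}(K_\beta(i')x)_{k-\tilde n_i-i'}$, whose only surviving term ($i'=0$) gives $K_\beta(0)x=x_{\beta,\alpha}x$, so that $x_k\,v_{-\tilde n_i}=v_{-\tilde n_i}x_k-x_{\beta,\alpha}x_{k-\tilde n_i}$ with $v=\vac\otimes w_\beta(-1)e^0$; the term carrying $v_{-\tilde n_i}=K_\beta(-\tilde n_i)$ to the left again lands in $\GS(\alpha+\beta)$. Here no sign $(-1)^{\tilde n_i}$ appears, so each peel contributes $-x_{\beta,\alpha}$, and after cancelling $(-1)^m$ the summation gives $b_n(x_{\beta,\alpha})x_{k-n}y$. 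The hard part will not be the (routine) sign and index bookkeeping but rather the clean justification of the absorption principle --- that the discarded families really lie in $\GS(\alpha+\beta)$ --- and the verification that the stepwise reduction modulo $\GS(\alpha+\beta)$ is legitimate at every stage.
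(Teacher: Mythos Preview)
Your proof is correct and follows essentially the same route as the paper's: reduce $D_\alpha(-n)$ on $\KS(\alpha)$ to the sum over $B^m(n)$, peel off one $K_\alpha(-\tilde n_i)$ at a time via the Borcherds identity, absorb the terms carrying $K_{\alpha+\beta}(-p)$ into $\GS(\alpha+\beta)$, and let $y\in\KS(\beta)$ kill the remaining positive modes. The paper is terser---it writes the first-family terms directly as $K_{\alpha+\beta}(-n)(\cdots)+K_{\alpha+\beta}(-n-1)(\cdots)+\cdots$ and then simply invokes commutativity of the $K_\alpha(h)$ for the iteration---while you spell out the absorption principle and the stepwise consistency more carefully, but the underlying argument is the same.
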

\begin{proof}
  In the case $n=0$ we have $D_{\alpha}(0)\vert_{\KS(\alpha)}=1$ and $b_0=1$ so that
  the statement is trivial. Consider $n>0$. By assumption we have
  $\alpha,\beta,\alpha+\beta\notin f^\perp$ so that $K_\alpha(j)=x_{\alpha,\beta}K_\beta(j)$
  and $K_\alpha(j)=x_{\alpha,\alpha+\beta}K_{\alpha+\beta}(j)$ for all $j\in\mathbb{Z}$. 
  Compare with (\ref{Moneypenny}). Using this and the Borcherds identity we get
  \begin{align*}
    (K_\alpha(&-n)x)_ky\\
	&=\sum_{j=0}^{\infty}(-1)^j\binom{-n}{j}
	(K_\alpha(-n-j)(x_{k+j}y)-(-1)^nx_{k-n-j}(K_\alpha(j)y))\\
	&=-(-1)^n x_{\alpha,\beta}x_{k-n}y+K_{\alpha+\beta}(-n)(\cdots)+
	K_{\alpha+\beta}(-n-1)(\cdots)+\cdots\\
	&= -(-1)^n x_{\alpha,\beta}x_{k-n}y\mod \GS(\alpha+\beta).
  \end{align*}
  The operators $K_\alpha(h)$ preserve the space $\KS(\alpha)$ for all $h\in\Z$.
  Therefore the commutator identity (\ref{LeChiffre}) implies
  \begin{align*}
    (D_\alpha(&-n,m)x)_ky\\
	&=\sum_{\substack{n_1+\cdots+n_m=n \\ n_1,\cdots,n_m> 0}}
	(K_\alpha(-n_1)\cdots K_\alpha(-n_m)x)_ky\\
	&=\sum_{\substack{n_1+\cdots+n_m=n \\ n_1,\cdots,n_m> 0}}(-1)^{m+n}x_{\alpha,\beta}^m
	x_{k-n}y\mod \GS(\alpha+\beta)\\
	&= (-1)^{m+n}x_{\alpha,\beta}^m \vert B^m(n)\vert x_{k-n}y\mod \GS(\alpha+\beta).
  \end{align*}
  Taking the sum over $m$ yields the stated result for $D_\alpha(n)$. The second equation
  can be proved analogously.                                                                                                                                                                                                                                                                                                                                                                                                                                                                                                                                                                                                                                                                                                                                                                                                                                                                                                                                                                                                                                                                                                                                              
\end{proof}

For all $k,n\in\Z_{\geq 0}$ we need certain maps $\imath_{k,n}$ and $\jmath_{k,n}$
acting on $V$. We define them recursively. We start with
\begin{align}\label{Hugo}
  \imath_{0,n}=\jmath_{0,n}=\delta_{0,n}
\end{align}
and for $k>0$ we set
\begin{align}
  \imath_{k,n}&=\sum_{j=1}^k\sum_{m=1}^n(-1)^mb_m(x_{\alpha,\beta})p_{j-1}^{k-1}(-m)
  \imath_{j-1,n-m}L_m\text{ and }\label{Drax}\\
  \jmath_{k,n}&=\sum_{j=1}^k\sum_{m=1}^nb_m(x_{\beta,\alpha})p_{j-1}^{k-1}(-m)\jmath_{j-1,n-m}L_m.
\end{align}
These maps are of degree $-n$, with respect to the $L_0$-grading of $V$. We find that
\begin{align*}
  \imath_{0,n}&=\delta_{0,n},\\
  \imath_{1,n}&=(-1)^n\delta_{n>0}b_n(x_{\alpha,\beta})L_n\text{ and}\\
  \imath_{2,n}&=(-1)^n\sum_{m=1}^{n-1}b_{n-m}(x_{\alpha,\beta})
  b_m(x_{\alpha,\beta})L_{n-m}L_m-(-1)^nnb_n(x_{\alpha,\beta})L_n,
\end{align*}
where
\begin{equation*}
\delta_{n>0}=
  \begin{dcases*}
    1 & if  $n>0$, \\[1ex]
    0 & if $n\leq 0$.
  \end{dcases*}
\end{equation*}
Notice that similar formulas can be derived for the maps $\jmath_{k,n}$ as well.
\begin{prop}
  For $k,h\in\Z$, $k\geq 0$, and all $v,w\in V$, $x\in \KS(\alpha)$ and $y\in \KS(\beta)$
  we have
  \begin{align*}
    \big(E_\alpha^k(v\otimes e^\alpha)\big)_h y&=\sum_{n=0}^\infty (\imath_{k,n}(v)\otimes
    e^{\alpha})_{h-n}y \mod \GS(\alpha+\beta)\text{ and}\\
	x_h\big(E_\beta^k(w\otimes e^\beta)\big)&=\sum_{n=0}^\infty
	x_{h-n}(\jmath_{k,n}(w)\otimes e^{\beta}) \mod \GS(\alpha+\beta).
  \end{align*}  
\end{prop}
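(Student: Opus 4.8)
The plan is to prove both identities by induction on $k$ (using the statement for all smaller exponents), carrying out the first in detail; the second follows from the verbatim computation with the second half of Lemma \ref{Moonraker}, where the missing sign $(-1)^m$ precisely matches the defining recursion for $\jmath_{k,n}$. For the base case $k=0$ we have $E_\alpha^0(v\otimes e^\alpha)=v\otimes e^\alpha$ and $\imath_{0,n}=\delta_{0,n}$ by (\ref{Hugo}), so both sides collapse to $(v\otimes e^\alpha)_h y$. By linearity in $v$ we may assume $v$ homogeneous throughout.

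The first step is to compute $E_\alpha(v\otimes e^\alpha)$. Since $v\otimes e^\alpha\in Ve^\alpha\subset\KS(\alpha)$ we have $D_\alpha(0)(v\otimes e^\alpha)=v\otimes e^\alpha$, so the term $(D_\alpha(0)-1)(L(0)-1)$ vanishes. Likewise $D_\alpha(n)$ annihilates $\KS(\alpha)$ for $n>0$: the commuting modes $K_\alpha(n_i)$ appearing in $D_\alpha(n,m)$ sum to $n>0$, so a positive mode can be commuted to the right and kills $x\in\KS(\alpha)$; this removes the $L(-n)D_\alpha(n)$ contributions. Finally $L(n)(v\otimes e^\alpha)=(L_n v)\otimes e^\alpha$ for $n\geq 1$, since $e^\alpha$ is primary in $V_{\hyp}$. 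This leaves the finite sum
\[
  E_\alpha(v\otimes e^\alpha)=\sum_{m\geq 1}D_\alpha(-m)\big((L_m v)\otimes e^\alpha\big),
\]
finite because $L_m v=0$ once $m$ exceeds the weight of $v$.

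The second step pushes $E_\alpha^{k-1}$ through $D_\alpha(-m)$ by Lemma \ref{DrNo}, which (using $p^{k-1}_{k-1}=1$) gives $E_\alpha^{k-1}D_\alpha(-m)=\sum_{j=0}^{k-1}p^{k-1}_j(-m)D_\alpha(-m)E_\alpha^j$. Applying $E_\alpha^{k-1}$ to the displayed expression and reindexing $j\mapsto j-1$ yields
\[
  E_\alpha^k(v\otimes e^\alpha)=\sum_{m\geq 1}\sum_{j=1}^{k}p^{k-1}_{j-1}(-m)\,D_\alpha(-m)\,E_\alpha^{j-1}\big((L_m v)\otimes e^\alpha\big).
\]
Now take the $h$-th mode against $y$. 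Because $E_\alpha$ preserves $\KS(\alpha)$ (as used in Lemma \ref{ReneMathis}), each $E_\alpha^{j-1}\big((L_m v)\otimes e^\alpha\big)$ lies in $\KS(\alpha)$, so Lemma \ref{Moonraker} collapses $D_\alpha(-m)$, contributing the factor $(-1)^m b_m(x_{\alpha,\beta})$ and shifting the mode index to $h-m$, modulo $\GS(\alpha+\beta)$. The induction hypothesis then applies to the $(j-1)$-fold power with input $L_m v$ and mode $h-m$, producing $\sum_{n'\geq 0}(\imath_{j-1,n'}(L_m v)\otimes e^\alpha)_{h-m-n'}y$.

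The final step is bookkeeping: substituting $n=m+n'$ (with $1\leq m\leq n$) fixes the mode index at $h-n$ and turns the coefficient of $(\,\cdot\,\otimes e^\alpha)_{h-n}y$ into $\sum_{j=1}^{k}\sum_{m=1}^{n}(-1)^m b_m(x_{\alpha,\beta})\,p^{k-1}_{j-1}(-m)\,\imath_{j-1,n-m}(L_m v)$, which is exactly $\imath_{k,n}(v)$ by (\ref{Drax}). I expect this matching of the reindexed coefficient against the recursion — keeping the polynomial weights $p^{k-1}_{j-1}(-m)$, the combinatorial factors $(-1)^m b_m(x_{\alpha,\beta})$, and the mode shift all simultaneously consistent — to be the main obstacle; the preparatory reductions (vanishing of the $L(0)$-term and of $D_\alpha(n)$ on $\KS(\alpha)$ for $n>0$) are routine once the commuting relations (\ref{LeChiffre}) for the $K_\alpha(n)$ are invoked.
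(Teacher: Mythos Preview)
Your argument is correct and follows the same route as the paper: induction on $k$, the reduction $E_\alpha(v\otimes e^\alpha)=\sum_{m\geq 1}D_\alpha(-m)(L_m v\otimes e^\alpha)$ on $\KS(\alpha)$, Lemma~\ref{DrNo} to commute $E_\alpha^{k-1}$ past $D_\alpha(-m)$, Lemma~\ref{Moonraker} to collapse $D_\alpha(-m)$ modulo $\GS(\alpha+\beta)$, and finally the reindexing $n=m+n'$ matched against the recursion (\ref{Drax}). The only cosmetic difference is that the paper runs the induction from $k$ to $k+1$ rather than from $k-1$ to $k$, and combines the top term $D_\alpha(-m)E_\alpha^k$ into the sum via $p^k_k=1$ exactly as you do.
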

\begin{proof}
  We give a proof by induction over $k$. The case $k=0$ is trivial. Using
  $v\otimes e^\alpha\in\KS(\alpha)$ and the Lemmas \ref{DrNo} and \ref{Moonraker} we find
  \begin{align*}
    \big(E&_\alpha^{k+1}v\otimes e^\alpha\big)_h y= \sum_{m=1}^\infty
	\big(E_\alpha^k D_\alpha(-m)(L_mv\otimes e^\alpha)\big)_hy\\
    =&\sum_{m=1}^\infty\big(\big([E_\alpha^k, D_\alpha(-m)]+D_\alpha(-m)E_\alpha^k\big)
    (L_mv\otimes e^\alpha)\big)_hy\\
    =&\sum_{m=1}^\infty\sum_{j=1}^{k}p^k_{j-1}(-m)\big(D_\alpha(-m)E_{\alpha}^{j-1}
    (L_m v\otimes e^\alpha)\big)_h y +\\
    &\sum_{m=1}^\infty \big(D_\alpha(-m)E_\alpha^k(L_mv\otimes e^\alpha)\big)_h y\\
    =&\sum_{m=1}^\infty\sum_{j=1}^{k+1}p^k_{j-1}(-m)\big(D_\alpha(-m)E_\alpha^{j-1}
    L_mv\otimes e^\alpha)\big)_{h}y\\
    =&\sum_{m=1}^\infty\sum_{j=1}^{k+1}(-1)^mb_m(x_{\alpha,\beta})p^k_{j-1}(-m)
    \big(E_\alpha^{j-1}(L_mv\otimes e^\alpha)\big)_{h-m}y\mod \GS(\alpha+\beta).
    \intertext{Assuming that the first statement of this proposition holds for all
    $0\leq j\leq k$, we can continue the computation with}
    =&\sum_{m=1}^\infty(-1)^mb_m(x_{\alpha,\beta})\sum_{j=1}^{k+1}p^k_{j-1}(-m)
    \sum_{l=0}^\infty\big((\imath_{j,l}L_mv)\otimes e^\alpha\big)_{h-m-l}y\\
    =&\sum_{n=0}^\infty\sum_{j=1}^{k+1}\sum_{m=1}^n(-1)^mb_m(x_{\alpha,\beta})
    p^k_{j-1}(-m)\big(\imath_{j,n-m}(L_mv)\otimes e^\alpha\big)_{h-n}y\\
    =&\sum_{n=0}^\infty \big(\imath_{k+1,n}(v)\otimes e^\alpha\big)_{h-n}y
    \mod \GS(\alpha+\beta),
  \end{align*}	
  where the last equation employed (\ref{Drax}). We proved the statement for $k+1$.
  The second identity follows analogously.
\end{proof}
  
So far our aim was to find an expression for (\ref{Solitaire}) such that we
can calculate its projection to $Ve^{\alpha+\beta}$ explicitly. As a
consequence of (\ref{M}), this product can be written as a linear combination of expressions
\begin{align*}
  \big(E_\alpha^{k_1}&(v\otimes e^\alpha)\big)_h
  \big(E_\beta^{k_2}(w\otimes e^\beta)\big)\\
  &=\sum_{n_1,n_2=0}^\infty(\imath_{k_1,n_1}(v)\otimes
  e^{\alpha})_{h-n_1-n_2} (\jmath_{k_2,n_2}(w)\otimes
  e^{\beta})\mod \GS(\alpha+\beta)
\end{align*}
for $k_1,k_2\in \Z_{\geq 0}$, $h\in \Z$ and $v,w\in V$. It remains to compute
the image of such linear combinations under $p_V$ (cf. (\ref{Goldfinger})).
    
For elements $\gamma,\delta\notin f^\perp$ we have $x_{\delta,\gamma}\ne 0$ and
introduce the numbers
\begin{equation}
  A(m)=-x_{\delta,\gamma}(1-x_{\delta,\gamma})^{m-1}
\end{equation}
for all $m\in\Z_{>0}$. Using $x_{\delta,\gamma}x_{\gamma,\delta}=1$ we compute
\begin{equation}\label{SMERSH}
  \sum_{i=1}^{m-1}A(i)=-\left(x_{\gamma,\delta}A(m)+1\right).
\end{equation}
Of course this equation already fully determines the numbers $A(m)$.
\begin{lemma}\label{Blades}
  We fix elements $\gamma,\delta\notin f^\perp$. For $k\in \Z$, $k\geq 0$,
  and $v\in V$ we have
  \begin{equation*}
    kv\otimes S_k(\gamma)e^{\delta} = -\sum_{m=1}^{k} A(m)v\otimes L_{-m}
    S_{k-m}(\gamma)e^{\delta}\mod \GS(\delta).
  \end{equation*}
\end{lemma}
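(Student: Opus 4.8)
The plan is to carry out the whole computation modulo $\GS(\delta)$ and to push everything onto the Heisenberg modes $\delta(-m)$, discarding every contribution coming from the isotropic direction $w_\delta$; the coefficient identity (\ref{SMERSH}) will then force a telescoping cancellation. The organising principle is that applying a negative mode $K_\delta(-m)$ always produces a vector in $\GS(\delta)$, so that modulo $\GS(\delta)$ the operator $\gamma(-m)$ may be replaced by its $\delta$-component and the conformal vector by its $\delta$-part. This is a direct computation rather than an induction on $k$.

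First I would establish this reduction: for $m>0$ and any $\xi\in\HS(\delta)$ one has $K_\delta(-m)\xi\in\GS(\delta)$. By Lemma \ref{VesperLynd} we have $\HS(\delta)=Ve^\delta\oplus\GS(\delta)$ with $\GS(\delta)=\KS'(\delta)\oplus\SSp(\delta)$ and $Ve^\delta\subseteq\KS(\delta)$, so it suffices to send each summand into $\GS(\delta)$. The operator $K_\delta(-m)$ maps $\KS(\delta)$ into $\KS'(\delta)$ by the definition of $\KS'(\delta)$ as the span of the $t_{\und{0},\mu}$, it preserves $\KS'(\delta)$ since the $K_\delta(-j)$ commute, and it sends $\SSp(\delta)$ into $\GS(\delta)$ after commuting past the $L(-i)$ by (\ref{LeChiffre}). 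Hence $v\otimes K_\delta(-m)\zeta\equiv 0$ for all $\zeta$, and as the negative Heisenberg modes mutually commute this remains true whenever a factor $w_\delta(-j)=K_\delta(-j)$ can be pulled to the front.

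With this reduction I would then perform two substitutions. Decomposing $\gamma=x_{\delta,\gamma}\delta+c\,w_\delta$ in the basis $\{\delta,w_\delta\}$ of $\h$ — the coefficient of $\delta$ being $(\gamma,w_\delta)=x_{\delta,\gamma}$ — kills the $w_\delta$-part and gives $v\otimes\gamma(-m)S_j(\gamma)e^\delta\equiv x_{\delta,\gamma}\,v\otimes\delta(-m)S_j(\gamma)e^\delta$. Expanding the conformal vector in this null basis, in which no $\delta$--$\delta$ term occurs, shows $L_{-m}e^\delta=\delta(-m)e^\delta$ plus terms each carrying a factor $w_\delta(-j)$, so that $v\otimes S_j(\gamma)L_{-m}e^\delta\equiv v\otimes\delta(-m)S_j(\gamma)e^\delta$. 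Feeding this together with the commutator formula (\ref{TheSpangledMob}) for $[L_{-m},S_{k-m}(\gamma)]$ into $L_{-m}S_{k-m}(\gamma)e^\delta$ and reducing once more yields, modulo $\GS(\delta)$,
\[ v\otimes L_{-m}S_{k-m}(\gamma)e^\delta\equiv\Delta_{m,k-m}+x_{\delta,\gamma}\sum_{i=1}^{k-m}\Delta_{m+i,k-m-i}, \]
where $\Delta_{m,j}:=v\otimes\delta(-m)S_j(\gamma)e^\delta$.

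Finally I would form the difference $D_k$ of the two sides of the asserted identity, using the Schur recursion (\ref{Q}) to write $kv\otimes S_k(\gamma)e^\delta=\sum_{m=1}^k v\otimes\gamma(-m)S_{k-m}(\gamma)e^\delta$. After the substitutions above, $D_k$ reduces to $\sum_{m=1}^k(x_{\delta,\gamma}+A(m))\Delta_{m,k-m}+x_{\delta,\gamma}\sum_{m=1}^k A(m)\sum_{i=1}^{k-m}\Delta_{m+i,k-m-i}$. Re-indexing the double sum by $n=m+i$ and using (\ref{SMERSH}) to replace $\sum_{m=1}^{n-1}A(m)$ by $-(x_{\gamma,\delta}A(n)+1)$ collapses it (via $x_{\delta,\gamma}x_{\gamma,\delta}=1$) to $-\sum_{n=2}^k(x_{\delta,\gamma}+A(n))\Delta_{n,k-n}$, which telescopes against the first sum and leaves only $(x_{\delta,\gamma}+A(1))\Delta_{1,k-1}$. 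Since $A(1)=-x_{\delta,\gamma}$ this is zero, so $D_k\equiv 0$. I expect the main obstacle to be the reduction of the second paragraph, namely checking carefully that $K_\delta(-m)$ maps all of $\HS(\delta)$ into $\GS(\delta)$; once the $w_\delta$-direction is eliminated the rest is formal bookkeeping, and it is worth noting that the individual $\Delta_{m,j}$ never have to be computed since they cancel identically.
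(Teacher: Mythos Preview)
Your argument is correct and is essentially the same as the paper's proof: both reduce modulo $\GS(\delta)$ by discarding every term carrying a factor $w_\delta(-j)$, use the commutator formula (\ref{TheSpangledMob}) together with the Schur recursion (\ref{Q}), and then invoke (\ref{SMERSH}) with $x_{\gamma,\delta}A(1)+1=0$ to obtain the telescoping cancellation. The only cosmetic differences are that you package the surviving terms as $\Delta_{m,j}=v\otimes\delta(-m)S_j(\gamma)e^\delta$ and show the difference of the two sides vanishes, whereas the paper keeps the $\gamma(-m)$-form (related to yours by the scalar $x_{\gamma,\delta}$) and uses an add--and--subtract to isolate $-kv\otimes S_k(\gamma)e^\delta$ directly; your explicit verification that $K_\delta(-m)\HS(\delta)\subseteq\GS(\delta)$ just spells out a step the paper leaves implicit.
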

\begin{proof}  
  The elements $\delta$ and $w_{\delta}$ clearly form a basis of $\hyp \otimes \R$
  and we have $\gamma=(\gamma,w_{\delta})\delta+cw_{\delta}$ for some constant
  $c\in \R$. Moreover the dual basis of $\delta,w_\delta$ is given by
  $w_\delta,\delta-\delta^2w_\delta$ and we  compute
  \begin{equation*}
    L_{-m}e^{\delta}=\delta(-m)e^{\delta}+w_{\delta}(-1)(\cdots)e^\delta+\cdots+
    w_{\delta}(-m)(\cdots)e^\delta.
  \end{equation*}
  This identity and $x_{\gamma,\delta}x_{\delta,\gamma}=1$ imply
  \begin{align*}
    v\otimes &S_{k-m}(\gamma)L_{-m}e^{\delta}\\
    &=v\otimes S_{k-m}(\gamma)(\delta(-m)e^{\delta}+w_{\delta}(-1)(\cdots)
    e^\delta\cdots+w_{\delta}(-m)(\cdots)e^\delta)\\
    &=v\otimes S_{k-m}(\gamma)\delta(-m)e^{\delta}+K_{\delta}(-1)(\cdots)+
    \cdots+K_{\delta}(-m)(\cdots)\\
    &=v\otimes S_{k-m}(\gamma)\delta(-m)e^{\delta} \mod \GS(\delta)\\
    &=x_{\gamma,\delta}v\otimes S_{k-m}(\gamma)\gamma(-m)e^{\delta}
    \mod \GS(\delta).
  \end{align*}
  Applying this identity and (\ref{TheSpangledMob}) we find
  \begin{align*}
    \sum_{m=1}^{k}& A(m)v\otimes L_{-m}S_{k-m}(\gamma)e^{\delta}\\
	=&\sum_{m=1}^{k} A(m)v\otimes \left(S_{k-m}(\gamma)L_{-m}e^{\delta}+
	\sum_{h=1}^{k-m}\gamma(-m-h)S_{k-m-h}(\gamma)e^\delta\right)\\
	=&-\sum_{m=1}^{k}v\otimes S_{k-m}(\gamma)\gamma(-m)e^{\delta}\\
	&+\sum_{m=1}^{k} \left(x_{\gamma,\delta}A(m)+1\right)v\otimes
	S_{k-m}(\gamma)\gamma(-m)e^{\delta}\\
	&+\sum_{m=1}^{k}\sum_{h=1}^{k-m}A(m)v\otimes\gamma(-m-h)S_{k-m-h}
	(\gamma)e^\delta\mod \GS(\delta).\\
  \end{align*}
  By (\ref{Q}), the first sum is the expression we seek. It is enough
  to show that the remaining sums add up to $0\mod \GS(\delta)$. A rearrangement
  of the indices and (\ref{SMERSH}) yield
  \begin{align*}
    &\sum_{m=1}^{k} \left(x_{\gamma,\delta}A(m)+1\right)v\otimes
    S_{k-m}(\gamma)\gamma(-m)e^{\delta}\\
    &+\sum_{m=1}^{k}\sum_{h=1}^{k-m}A(m)v\otimes\gamma(-m-h)S_{k-m-h}
    (\gamma)e^\delta \\
    =&\sum_{m=1}^{k} \left(x_{\gamma,\delta}A(m)+1\right)v\otimes
    S_{k-m}(\gamma)\gamma(-m)e^{\delta}\\
    &+\sum_{i=2}^{k}\left(\sum_{j=1}^{i-1}A(j)\right)v\otimes\gamma(-i)
    S_{k-i}(\gamma)e^\delta=0\mod \GS(\delta).
  \end{align*}
  The last equality uses $x_{\gamma,\delta}A(1)+1=0$.
\end{proof}
\noindent
We remark that this lemma also holds if $(\gamma,w_{\delta})=0$, since both
sides of the identity vanish. Yet we are mostly interested in the case
$\gamma=\alpha$ and $\delta=\alpha+\beta$, for which we always have
$x_{\alpha+\beta,\alpha}\ne 0$.
\begin{lemma}\label{SaratogaSprings}
  We fix elements $\gamma,\delta\notin f^\perp$. Then for $k\in \Z_{\geq 0}$
  and $v\in V$ we have
  \begin{equation*}
    kp_V(v\otimes S_k(\gamma)e^{\delta})=\sum_{m=1}^{k} A(m)p_V(L_{-m}v\otimes
    S_{k-m}(\gamma)e^{\delta}).
  \end{equation*}
\end{lemma}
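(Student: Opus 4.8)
The plan is to reduce this statement to Lemma \ref{Blades} by applying the projection $p_V$ and then transferring the operator $L_{-m}$ from the $V_{\hyp}$-factor to the $V$-factor. Since $p_V$ has kernel $\GS(\delta)$ (cf. (\ref{Goldfinger})), applying it directly to the congruence of Lemma \ref{Blades} gives
\begin{equation*}
  kp_V(v\otimes S_k(\gamma)e^{\delta})=-\sum_{m=1}^{k}A(m)p_V(v\otimes L_{-m}S_{k-m}(\gamma)e^{\delta}),
\end{equation*}
where $L_{-m}$ still acts on the lattice factor. Comparing with the claimed identity, it remains only to show that for each $m>0$ one may replace $v\otimes L_{-m}(\cdots)$ by $-(L_{-m}v)\otimes(\cdots)$ after applying $p_V$.

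The crucial input is that the total Virasoro operator $L(-m)=L_{-m}\otimes1+1\otimes L_{-m}$ maps all of $\HS(\delta)$ into $\GS(\delta)=\ker p_V$ for every $m>0$. I would deduce this from the eigenvalue description of $E_\delta$: by Lemma \ref{VesperLynd} the vectors $(t_k)_{\lambda,\mu}$ form a basis of eigenvectors of $E_\delta$, with $\TS(\delta)$ the $0$-eigenspace and $\GS(\delta)$ the span of the eigenvectors of strictly negative eigenvalue. The commutator $[L(-m),E_\delta]=mL(-m)$ from (\ref{JamesBond}) gives $E_\delta L(-m)=L(-m)(E_\delta-m)$, so $L(-m)$ sends the $\nu$-eigenspace into the $(\nu-m)$-eigenspace. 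As every eigenvalue $\nu$ satisfies $\nu\leq0$ and $m>0$, the image lies in eigenspaces of strictly negative eigenvalue, i.e. in $\GS(\delta)$. This step is the main point of the argument; everything else is bookkeeping.

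Granting this, I would apply it to the vector $v\otimes S_{k-m}(\gamma)e^{\delta}\in\HS(\delta)$, obtaining
\begin{equation*}
  (L_{-m}v)\otimes S_{k-m}(\gamma)e^{\delta}+v\otimes L_{-m}S_{k-m}(\gamma)e^{\delta}\in\GS(\delta),
\end{equation*}
and hence $p_V(v\otimes L_{-m}S_{k-m}(\gamma)e^{\delta})=-p_V((L_{-m}v)\otimes S_{k-m}(\gamma)e^{\delta})$. Substituting this into the displayed consequence of Lemma \ref{Blades} cancels the two minus signs and yields exactly
\begin{equation*}
  kp_V(v\otimes S_k(\gamma)e^{\delta})=\sum_{m=1}^{k}A(m)p_V(L_{-m}v\otimes S_{k-m}(\gamma)e^{\delta}),
\end{equation*}
as desired; the case $k=0$ is trivial since both sides vanish.
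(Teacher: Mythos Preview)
Your proof is correct and follows essentially the same approach as the paper: both use that $L(-m)\HS(\delta)\subset\GS(\delta)$ to swap $1\otimes L_{-m}$ for $-L_{-m}\otimes1$ modulo $\GS(\delta)$, then invoke Lemma~\ref{Blades} and the fact that $\ker p_V=\GS(\delta)$. The paper states the inclusion $L(-m)(v\otimes S_{k-m}(\gamma)e^\delta)\in\GS(\delta)$ without comment, whereas you supply the eigenvalue argument via $[L(-m),E_\delta]=mL(-m)$; otherwise only the order of the steps differs.
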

\begin{proof}
  We have $\sum_{m=1}^kA(m)L(-m)(v\otimes S_{k-m}(\gamma)e^\delta)=0\mod \GS(\delta)$.
  With Lemma \ref{Blades} we obtain
  \begin{align*}
    \sum_{m=1}^{k}& A(m)L_{-m}v\otimes S_{k-m}(\gamma)e^{\delta}\\
    &=-\sum_{m=1}^{k} A(m)v\otimes L_{-m}S_{k-m}(\gamma)e^{\delta}
    =kv\otimes S_k(\gamma)e^{\delta}  \mod \GS(\delta).
  \end{align*}
  Since the kernel of $p_V$ is precisely $\GS(\delta)$, the lemma is proved.
\end{proof}
\noindent
This lemma allows us to compute the projection $p_V(v\otimes S_k(\gamma)e^{\delta})$
by a recursive argument. 
  
Given a tuple $\und{n}=(n_1,\cdots,n_m)\in B^m(n)$ we put $-\und{n}=(-n_1,\cdots,-n_m)$
and introduce the numbers
\begin{equation*}
  c(\und{n})=\prod_{i=1}^m\frac{A(n_i)}{n_i+\cdots+n_m}.
\end{equation*}
Furthermore set $L(-\und{n})=L_{-n_m}\cdots L_{-n_1}$. In addition we set $L(-\und{0})=1$
and $c(\und{0})=1$ for the empty tuple $\und{0}=()$. It will sometimes be convenient to
shift indices in the set $B(n)$ such that
\begin{equation*}
  B^{m+1}(n)=\{(n_0,\und{n}'):n_0\in\{1,\cdots,n-m\}\text{, }\und{n}'\in B^{m}(n-n_0)\}.
\end{equation*}
For $\und{n}=(n_0,\und{n}')\in B^{m+1}(n)$ we then have $L(-\und{n})=L(-\und{n}')L_{-n_0}$
and $c(\und{n})=\frac{A(n_0)}{n}c(\und{n}')$. We introduce the operators
\begin{equation*}
  p^{\gamma,\delta}_n=\sum_{\und{n}\in B(n)}c(\und{n})L(-\und{n}).
\end{equation*}
The numbers $c(\und{n})$ depend on $\gamma$ and $\delta$ because the numbers $A(m)$ do.
Using the recursive description of the numbers $c(\und{n})$ and the sets $B(n)$ we find a
recursive identity for the operators $p^{\gamma,\delta}_n$. This is
\begin{equation}\label{GoldenEye}
  np^{\gamma,\delta}_n=\sum_{j=1}^nA(j)p^{\gamma,\delta}_{n-j}L_{-j}.
\end{equation}
We calculate $p^{\gamma,\delta}_0=1$, $p^{\gamma,\delta}_1=A(1)L_{-1}$ and
$2p^{\gamma,\delta}_2=A(1)^2L_{-1}^2+A(2)L_{-2}$.
\begin{prop}\label{MI6}
  For $k\in\Z$, $k\geq 0$, and $v\in V$ we have
  $p_V(v\otimes S_k(\gamma)e^\delta)=p^{\gamma,\delta}_kv$.
\end{prop}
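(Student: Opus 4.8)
The plan is to prove Proposition \ref{MI6} by induction on $k$, combining the recursive descent provided by Lemma \ref{SaratogaSprings} with the defining recursion (\ref{GoldenEye}) for the operators $p^{\gamma,\delta}_k$. Throughout I identify $Ve^\delta$ with $V$, so that the right-hand side $p^{\gamma,\delta}_k v$ is read as the element $(p^{\gamma,\delta}_k v)\otimes e^\delta$ of $Ve^\delta$.

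For the base case $k=0$ I note that $S_0(\gamma)=1$, so $v\otimes S_0(\gamma)e^\delta = v\otimes e^\delta$ already lies in $Ve^\delta$; hence $p_V$ fixes it. Since $p^{\gamma,\delta}_0=1$, both sides equal $v\otimes e^\delta$ and the claim holds.

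For the inductive step I assume the identity for all smaller values and apply Lemma \ref{SaratogaSprings}, which gives
\[
  k\,p_V(v\otimes S_k(\gamma)e^\delta)=\sum_{m=1}^k A(m)\,p_V(L_{-m}v\otimes S_{k-m}(\gamma)e^\delta).
\]
Each term on the right involves $S_{k-m}(\gamma)$ with $k-m<k$, so the induction hypothesis rewrites it as $p^{\gamma,\delta}_{k-m}(L_{-m}v)$. Pulling the operators together yields $\sum_{m=1}^k A(m)\,p^{\gamma,\delta}_{k-m}L_{-m}\,v$, which by (\ref{GoldenEye}) is precisely $k\,p^{\gamma,\delta}_k v$.

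The only point requiring care is that this argument establishes the identity with a factor of $k$ on both sides, so the conclusion for $k\geq 1$ follows only after dividing by $k$; this is exactly why the case $k=0$ must be handled separately rather than as part of the recursion. I expect no genuine obstacle here: the content of the proof lies entirely in the two lemmas already proved, since the statement amounts to the observation that $p_V(v\otimes S_k(\gamma)e^\delta)$ and $p^{\gamma,\delta}_k v$ obey the same recursion and share the same initial value.
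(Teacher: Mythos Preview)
Your proof is correct and follows essentially the same route as the paper: induction on $k$, the base case $k=0$ handled directly via $S_0(\gamma)=1$ and $p^{\gamma,\delta}_0=1$, and the inductive step obtained by feeding Lemma \ref{SaratogaSprings} into the recursion (\ref{GoldenEye}) and then dividing by $k$.
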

\begin{proof}
  We give a proof by induction. The case $k=0$ is trivial. Assume the statement
  for all $j\leq k$. Then Lemma \ref{SaratogaSprings} and the identity (\ref{GoldenEye})
  imply
  \begin{align*}
    (k+1)p_V(v\otimes &S_{k+1}(\gamma)e^\delta)\\
    &=\sum_{m=1}^{k+1}A(m)p_V(L_{-m}v\otimes S_{k+1-m}(\gamma)e^{\delta})\\
	&=\sum_{m=1}^{k+1}A(m)p^{\gamma,\delta}_{k+1-m}(L_{-m}v)
	=(k+1)p^{\gamma,\delta}_{k+1}v.
  \end{align*}	
  This is the statement for the case $k+1$.
\end{proof}

We set $n=1-\alpha^2/2$ and $m=1-\beta^2/2$. Consider $v\in V_{n}$ and
$w\in V_{m}$. Since the degree of elements in $\HS(\alpha)$ is bounded from below
by $\alpha^2/2$, we find
\begin{equation}
  p_{\TS}(v\otimes e^\alpha)=\sum_{i=1}^n\frac{1}{n!}
  S_i(1,\cdots,n)E_\alpha^i(v\otimes e^\alpha).
\end{equation}
Denoting the coefficients in those sums by $S_{i,n}$, we may introduce a
weighted sum of the operators $\imath_{k,n}$ and $\jmath_{k,n}$ as follows
\begin{align}\label{GoldenGun}
  \imath^{(n)}_{n_1}=\sum_{k_1=1}^nS_{k_1,n}\imath_{k_1,n_1}
  \text{ and }
  \jmath^{(m)}_{n_2}=\sum_{k_2=1}^mS_{k_2,m}\jmath_{k_2,n_2}.
\end{align}
These operators act on $V_n$ or $V_m$, respectively. In the following we will drop
the upper indices to simplify notations.
\begin{theorem}\label{Blofeld}
  Let $V$ be a vertex operator algebra of central charge $24$ of CFT-type with a
  non-degenerate, invariant bilinear form. Choose a primitive isotropic element
  $f\in\hyp$ and assume that $\alpha,\beta,\alpha+\beta\notin f^\perp$. Then for
  $v\in V_{1-\alpha^2/2}$ and $w\in V_{1-\beta^2/2}$ we have
  \begin{align*}
    \{v,w\}_{\alpha,\beta}=
    \sum_{n_1,n_2=0}^\infty\sum_{k=0}^\infty p^{\alpha,\alpha+\beta}_{n_1+n_2+k-(\alpha,\beta)}
    (\imath_{n_1}(v)_k\jmath_{n_2}(w)).
   \end{align*}
\end{theorem}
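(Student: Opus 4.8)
The plan is to unwind every definition and then feed the result into the computational lemmas already established, so that the proof becomes an assembly rather than a fresh calculation. The first move is to recognise that $\eta_{\alpha+\beta}^{-1}$, applied to a Lie bracket class, is nothing but the projection $p_V$. By definition the class $[\eta_\alpha(v),\eta_\beta(w)]$ is represented by $p_\TS(v\otimes e^\alpha)_0\,p_\TS(w\otimes e^\beta)\in\PS^1(\alpha+\beta)$, and Lemma \ref{ReneMathis} shows that modulo $N^1$ this class has a unique representative in $\TS^1(\alpha+\beta)$, obtained by applying $p_\TS$. Since $p_\TS$ and $p_V$ share the kernel $\GS(\alpha+\beta)$ (cf. (\ref{Goldfinger})) and $p_\TS\vert_{Ve^{\alpha+\beta}}$ is inverse to $p_V\vert_{\TS(\alpha+\beta)}$, one finds that $\eta_{\alpha+\beta}^{-1}([z])$ is exactly the $V$-component of $p_V(z)$. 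Hence it suffices to compute $p_V$ of the product (\ref{Solitaire}); because $p_V$ factors through $\HS(\alpha+\beta)/\GS(\alpha+\beta)$, working modulo $\GS(\alpha+\beta)$ throughout loses no information. Conveniently, the explicit product below carries a factor $\epsilon(\alpha,\beta)$, which cancels the one in the definition of $\{\cdot,\cdot\}_{\alpha,\beta}$.

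Next I would expand both physical projections by (\ref{M}), writing $p_\TS(v\otimes e^\alpha)$ and $p_\TS(w\otimes e^\beta)$ as finite linear combinations $\sum_{k_1}S_{k_1,n}E_\alpha^{k_1}(v\otimes e^\alpha)$ and $\sum_{k_2}S_{k_2,m}E_\beta^{k_2}(w\otimes e^\beta)$, with $n=1-\alpha^2/2$ and $m=1-\beta^2/2$. Taking the $0$-th product and applying the Proposition relating the $E$-operators to $\imath_{k,n},\jmath_{k,n}$ to both factors (valid since $E_\beta$ preserves $\KS(\beta)$ and $v\otimes e^\alpha\in Ve^\alpha\subset\KS(\alpha)$) rewrites the product, modulo $\GS(\alpha+\beta)$, as a sum of terms $(\imath_{k_1,n_1}(v)\otimes e^\alpha)_{-n_1-n_2}(\jmath_{k_2,n_2}(w)\otimes e^\beta)$. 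This eliminates the operators $E_\alpha,E_\beta$ in favour of explicit maps on $V$.

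The computational heart is the product of two pure states in the combined lattice vertex algebra. Since the annihilation part of $\Gamma_\alpha(z)$ fixes the ground state $e^\beta$ and $z^{\alpha_0}e^\beta=z^{(\alpha,\beta)}e^\beta$, formula (\ref{Octopussy}) gives $\Gamma_\alpha(z)e^\beta=\epsilon(\alpha,\beta)z^{(\alpha,\beta)}\big(\sum_{k}S_k(\alpha)z^{k}\big)e^{\alpha+\beta}$. Factoring the vertex operator of $V\otimes V_{\hyp}$ as $Y_V\otimes\Gamma$ and extracting the relevant residue then yields, for any $a,b\in V$,
\begin{equation*}
(a\otimes e^\alpha)_{-n_1-n_2}(b\otimes e^\beta)=\epsilon(\alpha,\beta)\sum_{k\geq 0}\big(a_{-n_1-n_2+(\alpha,\beta)+k}\,b\big)\otimes S_k(\alpha)e^{\alpha+\beta}.
\end{equation*}
Applying $p_V$ and invoking Proposition \ref{MI6} with $\gamma=\alpha$ and $\delta=\alpha+\beta$ turns each $S_k(\alpha)$ into the operator $p^{\alpha,\alpha+\beta}_k$. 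Finally I would reindex: renaming the $V$-mode to $k$ forces the Schur index to be $n_1+n_2+k-(\alpha,\beta)$, and collecting the weights $S_{k_1,n}$, $S_{k_2,m}$ into the operators $\imath_{n_1},\jmath_{n_2}$ of (\ref{GoldenGun}) reproduces the stated double sum.

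I expect the main obstacle to lie in this very last step, namely in controlling the summation range. The two index shifts (the degree shift by $(\alpha,\beta)$ coming from $z^{\alpha_0}$ and the Schur shift) must be tracked simultaneously, and one has to verify that the sum over the $V$-mode may genuinely be taken from $k=0$. The factor $p^{\alpha,\alpha+\beta}_{n_1+n_2+k-(\alpha,\beta)}$ vanishes once its subscript is negative, which truncates the sum from one side; but the formal contributions with negative mode index require a separate argument showing that they drop out, which relies on the detailed structure of the maps $\imath_{n_1},\jmath_{n_2}$ (built from positive Virasoro modes) together with the CFT-type hypothesis, such as $L_1$ annihilating $V_1$. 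Everything else is the routine assembly of Lemma \ref{Moonraker}, the $\imath,\jmath$-Proposition, and Proposition \ref{MI6}, once the conceptual reduction $\eta_{\alpha+\beta}^{-1}=p_V$ is in place.
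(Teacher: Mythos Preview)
Your proposal is correct and follows essentially the same route as the paper: expand $p_\TS$ via (\ref{M}), apply the $\imath_{k,n}/\jmath_{k,n}$-Proposition to pass to $Ve^\alpha$- and $Ve^\beta$-type terms modulo $\GS(\alpha+\beta)$, compute $(x\otimes e^\alpha)_{-m}(y\otimes e^\beta)$ from (\ref{Octopussy}) as a Schur-polynomial expansion, and then invoke Proposition~\ref{MI6}. Your explicit identification of $\eta_{\alpha+\beta}^{-1}$ with $p_V$ is a step the paper leaves implicit; conversely, the paper does not carry out the ``separate argument'' you anticipate for the summation range---its proof writes an unrestricted $\sum_n$ and relies on the convention $p^{\gamma,\delta}_j=0$ for $j<0$, so the lower bound $k\geq 0$ in the statement is a notational choice rather than something established in the proof.
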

\begin{proof}
  With the operators $\imath_{n_1}$ and $\jmath_{n_2}$, defined in (\ref{GoldenGun}),
  we find
  \begin{align*}
    p_{\TS}(v\otimes &e^\alpha)_0p_{\TS}(w\otimes e^\beta)\\
  	=&\sum_{n_1,n_2=0}^\infty\big(\imath_{n_1}(v)\otimes e^\alpha\big)_{-n_1-n_2}
  	\big(\jmath_{n_2}(w)\otimes e^\beta\big)\mod \GS(\alpha+\beta).
  \end{align*}
  Using (\ref{Octopussy}) we obtain for any $m\in\Z$ and $x,y\in V$ that
  \begin{align*}
    \big(x\otimes e^\alpha\big)_{-m}\big(y\otimes e^\beta\big)
  	=\epsilon(\alpha,\beta) \sum_{n}x_n y\otimes S_{n+m-(\alpha,\beta)}(\alpha)
  	e^{\alpha+\beta}.
  \end{align*}
  Hence by Proposition \ref{MI6} and the fact that we rescaled the map
  $\{\cdot,\cdot\}_{\alpha,\beta}$ by $\epsilon(\alpha,\beta)$, the theorem follows.
\end{proof}
  
We consider a simple example of this result. Let $V$ be a vertex operator algebra as
in Theorem \ref{Blofeld} and take $\alpha\in\hyp$ with $1-\alpha^2/2=2$. Then
$\mathfrak{g}(V)_\alpha$ is isomorphic to $V_2$ under a no-ghost isomorphism.
For primary states $v,w\in V_2$, we deduce
\begin{align*}
  \{v,w\}_{\alpha,\alpha}
  =&\frac{1}{2}(v_{-2}w-w_{-2}v)-\frac{1}{8}(L_{-2}+L_{-1}^2)(v_0w-
  \frac{1}{2}L_{-1}v_1w)\\
  &+\frac{1}{128}(L_{-2}^2-2L_{-2}L_{-1}^2-\frac{7}{3}L_{-1}^4)v_{2}w.
\end{align*}
  
The case where $V$ is a real form of a unitary vertex operator algebra $V_\C$
of central charge $24$ with character
\begin{equation*}
  \text{ch}_{V_\C}(\tau)=j(\tau)-744 =q^{-1}+0+196884q+\cdots
\end{equation*}
is of particular interest. Here $j$ is Klein's $j$-invariant and $q=e^{2\pi i\tau}$.
In this case Frenkel, Lepowsky and Meurman \cite{FLM} conjectured that $V_\C$
is isomorphic to the (complex) Moonshine module, which is often called the
\emph{FLM conjecture}. Following the arguments in \cite{Bo92}, we can show that the
associated Lie algebra of physical states $\g(V)$ is a generalised Kac-Moody
algebra and isomorphic to the Monster Lie algebra. This observation has been applied by
Carnahan \cite{Car17} and Miyamoto \cite{Mi23} to shed new light on the FLM conjecture.
More precisely Miyamoto computed the $\Z$-graded vector space structure of Zhu's
Poisson algebra $V_\C/C_2(V_\C)$ of the vertex operator algebra $V_\C$. As a consequence the
$C_2$-cofiniteness of such vertex operator algebras follows.
Furthermore the restriction of Theorem \ref{Blofeld} to this special case answers
Question $4$, posed by Borcherds in Section $15$ of \cite{Bo92}.

\end{document}